\providecommand{\U}[1]{\protect\rule{.1in}{.1in}}
\newtheorem{theorem}{Theorem}[section]
\newtheorem{assumption}[theorem]{Assumption}
\newtheorem{example}[theorem]{Example}
\newtheorem{lemma}[theorem]{Lemma}
\newtheorem{remark}[theorem]{Remark}
\newenvironment{proof}[1][Proof]{\noindent\textbf{#1.} }{\ \rule{0.5em}{0.5em}}
\numberwithin{equation}{section}
\title{Nonparametric estimation of FBSDEs with random terminal time}
\author{Shaolin Ji\thanks{ Email: jsl@sdu.edu.cn }\quad
Chenyao Yu\thanks{ Email: yucy@mail.sdu.edu.cn }\quad
  Linlin Zhu\thanks{Corresponding author. Email: 201611343@mail.sdu.edu.cn } }
\affil{Institute for Financial Studies,
Shandong University, Jinan, Shandong 250100, PR China}
\begin{document}

\maketitle
\textbf{Abstract}.  This paper investigates the nonparametric estimation of the functional coefficients of the FBSDEs with random terminal time, including the local constant and local linear estimators. We provide complete two-dimensional asymptotics in both the time span and the sampling interval, allowing for the precise characterization of their distribution. Moreover, the empirical likelihood (EL) method to construct the data-driven confidence intervals for these estimators is provided.
Some numerical simulations investigate the finite-sample properties of the estimators and compare the performance of the EL method and the conventional method in constructing confidence intervals based on asymptotic normality.

\textbf{Keywords}.  Backward stochastic differential equations, Nonparametric estimation, Asymptotic normality, Empirical likelihood

\textbf{AMS subject classifications.} 60J60, 62G20, 62M05 	

\addcontentsline{toc}{section}{\hspace*{1.8em}Abstract}

\section{ Introduction}
Nonlinear BSDEs were introduced by
Pardoux and Peng \cite{peng1990}. It was shown in \cite{peng1991} that coupled with a forward SDE, such BSDEs give a probabilistic interpretation for systems of quasilinear parabolic and elliptic partial differential equations (PDEs), which generalized the classical Feynman-Kac formula for linear parabolic and elliptic PDEs. Later, many researchers developed the theory of FBSDEs and their applications. In mathematical finance, FBSDEs can be used in the theory of stochastic differential utility and the theory of contingent claim evaluation for a large investor. The existence and uniqueness of the solutions to some kinds of FBSDEs are closely linked to optimal stochastic control problems. In many applications, there is a lack of prior information about the structure of the model. Therefore, it becomes important to identify and estimate the parameters and functionals of the process with discretely observed data. In the past few decades, there has been a thorough study on the nonparametric estimation of the (jump-) diffusion processes and stochastic volatility models (see \cite{ait2016}, \cite{bandi2003}, \cite{bandi2018}, \cite{bandisv2018}, \cite{fan2003}, \cite{Florens1993}, \cite{Jacod2000}, \cite{Stanton1997}, \cite{wang2018}, \cite{xukeliEL}, \cite{xukeli}). However, as one of the important tools used to describe the stochastic process in many fields, research on the statistical inference of BSDEs is rarely performed.
In this paper, we consider the nonparametric estimation of  the forward-backward stochastic differential equations (FBSDEs) with random terminal time.

Consider the following FBSDE,
\begin{equation}
\label{model-FBSDE-general}
\left\{\begin{array}{l}
X_{t}=x+\int_{0}^{t} b\left(X_{s}\right) \mathrm{d} s+\int_{0}^{t} \sigma\left(X_{s}\right) \mathrm{d} W_{s}, \\
Y_{t}=g\left(X_{\tau}\right)+\int_{t\land \tau}^{\tau} f\left(X_{s}, Y_{s}, Z_{s}\right) \mathrm{d} s-\int_{t\land \tau}^{\tau} Z_{s} \mathrm{d}W_{s},
\end{array}\right.
\end{equation}
where $\left\{W_{t}\right\}_{t \geq 0}$ is a $d$-dimensional standard Brownian motion defined on a complete filtered probability space $(\Omega, \mathcal{F}, P)$. $\{\mathcal{F}_{t}\}_{t\geq 0}$ is the natural filtration of this Brownian motion such that $\mathcal{F}_{0}$ contains all $P$-null elements of $\mathcal{F}$. $\tau$ is an $\mathcal{F}_{t}$-stopping time with values in $[0,\infty]$.
$X, Y, Z$ take values in $\mathbb{R}^{d}, \mathbb{R}^{n}, \mathbb{R}^{n \times d}$, respectively.
Assume the specific form of the functional coefficients $b$, $\sigma$, and $f$ is unknown, but only the process $X_{t}$ and $Y_{t}$ are observed at discrete times. Based on these observations, we are interested in establishing the point-wise estimators of the functional coefficients and the term $Z$.

There are only a few studies on the statistical inference of BSDEs.
Su and Lin \cite{sulin2009} proposed the local constant estimator of $Z$ and the least square estimator of the unknown parameter in the generator for a linear FBSDE with fixed terminal time. Chen and Lin \cite{lin2010} considered a coupled Markovian FBSDE with fixed terminal time. They constructed the local linear estimators of the functional coefficients and $Z$ and obtained the asymptotic properties of these estimators.
In these articles, some assumptions are strict, such as some stationary assumptions. Moreover, their estimators are based on the relationship between FBSDEs with deterministic terminal time and quasi-linear parabolic PDEs, in which the solutions $\{Y_{t},Z_{t}\}$  and the functional coefficients of the FBSDEs are all deterministic functions of $X_{t}$ and $t$. However, there is insufficient information to estimate the bivariate functions with only one observed trajectory.
It is well known that BSDEs with random terminal time are connected with quasi-linear elliptic PDEs. The solutions to these BSDEs are deterministic functions of $\{X_{t}\}$, i.e., they depend on the current state of $X_{t}$, not on time. Applying this property, we construct the local constant estimators and local linear estimators of the functional coefficients and $Z$ for FBSDEs with random terminal time.
The asymptotic properties of the estimators are obtained under mild conditions (the process $X_{t}$ and $Y_{t}$ need not be stationary).
The local polynomial estimator, the double-smoothing estimator, can be easily generalized.

Given these estimators, the common confidence intervals can be obtained based on their asymptotic normality, provided consistent estimators of the asymptotic variances are available. However, this type of confidence interval is always symmetric, and the estimated variances usually have large biases. We apply the empirical likelihood in conjunction with the local constant estimators to construct the data-driven confidence intervals that can account for possible skewness of the estimators and avoid imprecise variance estimation. We focus on local constant smoothing, and the extensions to the local linear or local polynomial estimators are straightforward.

This paper is organized as follows. The models and some preliminaries are introduced in Section $2$. The estimators and their asymptotic properties are presented in Section $3$. 
Section $4$ introduces the EL method to construct the point-wise confidence interval. The proofs are given in Section $5$.
Some numerical examples are shown in Section $6$.

\section{Model setup and preliminaries}
Denote
$\mathbb{L}^{2,k}(0,\tau;\mathbb{R}^{n})$, where $k\in \mathbb{R}$, the space of $\mathbb{R}^{n}$-valued progressively measurable processes $f$ such that
\begin{flalign*}
\|f(\cdot)\|_{\mathbb{L}^{2,k}}:=\left(\mathbb{E}\int_{0}^{\tau}|f(\cdot)|^{2}e^{kt}dt\right)^{1/2} \leq \infty,
\end{flalign*}
and                                                                                                                                                                                                      $\mathbb{L}^{2}(0,\tau;\mathbb{R}^{n}):=\mathbb{L}^{2,0}(0,\tau;\mathbb{R}^{n})$;
Further, define
\begin{flalign*}
\mathcal{L}^{2,k}(0,\tau):=\mathbb{L}^{2,k}(0,\tau;\mathbb{R}^{n}) \times \mathbb{L}^{2,k}(0,\tau;\mathbb{R}^{n\times d}),\ \ \ \mathcal{R}:=\mathbb{R}^{d}\times\mathbb{R}^{n}\times\mathbb{R}^{n\times d}.
\end{flalign*}
Let $\{X_{t}^{x};t\geq 0\}$ denote the solution of the following forward SDE:
\begin{flalign}
\label{model-SDE}
X_{t}^{x}=x+\int_{0}^{t}b\left(X_{s}^{x}\right) ds+\int_{0}^{t}\sigma\left(X_{s}^{x}\right) dW_{s},\ t\geq 0,
\end{flalign}
where $x \in \mathbb{R}^{d}$, $b:\mathbb{R}^{d} \rightarrow \mathbb{R}^{d}$, $\sigma:\mathbb{R}^{d} \rightarrow \mathbb{R}^{d\times d}$ are globally Lipschitz and twice continuously differentiable.

\subsection{BSDEs with infinite horizon}
Consider the infinite horizon BSDE which is defined as follows,
\begin{flalign}
\label{model-FBSDE-infinite}
Y_{t}^{x}=Y_{T}^{x}+\int_{t}^{T}f\left(X_{s}^{x},Y_{s}^{x},Z_{s}^{x}\right) ds-\int_{t}^{T}Z_{s}^{x} dW_{s},\ \forall t,T\ s.t.\ 0\leq t\leq T,
\end{flalign}
where $f: \mathcal{R}\to\mathbb{R}^{n}$ is a continuous function, the process $\{X_{t}^{x};t\geq 0\}$ is defined as $(\ref{model-SDE})$.

\begin{assumption}
\label{assumption-BSDE-infinite-solution}
There exists some constants  $C_{1}> 0,\ C_{2}> 0,\ \mu < 0,\ p >0$ such that
\begin{flalign*}
&
|f(x,y,z)| \leq C_{1}\left(1+|x|^{p}+|y|+\|z\|\right),\\
&\left<f(x,y_{1},z)-f(x,y_{2},z),y_{1}-y_{2}\right> \leq \mu|y_{1}-y_{2}|^{2},\\
&|f(x,y,z_{1})-f(x,y,z_{2})|\leq C_{2}\|z_{1}-z_{2}\|.
\end{flalign*}
Moreover, for some $\lambda > 2\mu+C_{2}^{2}$, and all $x\in \mathbb{R}^{d}$,
\begin{flalign*}
&\mathbb{E}\int_{0}^{\infty}|f(X_{t}^{x},0,0)|^{2}e^{\lambda t}dt < \infty.
\end{flalign*}
\end{assumption}

According to Theorem $3.1$ in \cite{pardoux1998},
under Assumption \ref{assumption-BSDE-infinite-solution}, the BSDE $(\ref{model-FBSDE-infinite})$ has a unique solution $\{(Y_{t},Z_{t});t\geq 0\}$ which belongs to $\mathcal{L}^{2,\lambda}(0,\infty)$.
Consider a semilinear elliptic PDE in $\mathbb{R}^{d}$ which is of the form
\begin{flalign}
\label{model-pde-infinite horizon}
\mathbb{L} u(x)+f\left(x, u(x),(\nabla u \sigma)(x)\right)=0,\ x\in \mathbb{R}^{d},
\end{flalign}
where
\begin{flalign*}
\mathbb{L}=\sum_{i}^{d} b_{i}(x)\frac{\partial}{\partial x_{i}} +\frac{1}{2}\sum_{ij}^{d} (\sigma\sigma^{*})_{ij}(x) \frac{\partial^{2}}{\partial x_{i}\partial x_{j}}
\end{flalign*}
is the infinitesimal generator of the Markov process $\{X_{t}^{x};t\geq 0\}$.

\begin{lemma}[\cite{pardoux1998}, Theorem $4.1$]
Let Assumption $\ref{assumption-BSDE-infinite-solution}$ hold.
If $u\in C^{2}(\mathbb{R}^{d};\mathbb{R}^{n})$ is a classical solution of $(\ref{model-pde-infinite horizon})$ such that
\begin{flalign*}
\mathbb{E}\left[\int_{0}^{\infty}e^{\lambda t}\|
(\nabla u\sigma)(X_{t}^{x})\|^{2}dt\right]<\infty,\ x\in \mathbb{R}^{d}.
\end{flalign*}
Then for each $x\in \mathbb{R}^{d}$, $\{u(X_{t}^{x}),(\nabla u \sigma) (X_{t}^{x}); t>0\} $ is the unique solution of the  BSDE
$(\ref{model-FBSDE-infinite})$.
\end{lemma}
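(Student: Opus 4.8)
This is a nonlinear Feynman--Kac representation for the elliptic problem, so the plan is to verify the BSDE directly by It\^o's formula and then invoke the uniqueness already granted by Theorem~3.1 of \cite{pardoux1998}. Set $Y_t^x:=u(X_t^x)$ and $Z_t^x:=(\nabla u\,\sigma)(X_t^x)$. Since $u\in C^2(\mathbb{R}^d;\mathbb{R}^n)$ and $\{X_t^x\}$ solves $(\ref{model-SDE})$ with infinitesimal generator $\mathbb{L}$, applying It\^o's formula to $u(X^x_\cdot)$ on an arbitrary finite interval $[t,T]$ gives
\begin{flalign*}
u(X_T^x)-u(X_t^x)=\int_t^T \mathbb{L}u(X_s^x)\,ds+\int_t^T (\nabla u\,\sigma)(X_s^x)\,dW_s .
\end{flalign*}
Substituting the PDE $(\ref{model-pde-infinite horizon})$ in the form $\mathbb{L}u(X_s^x)=-f\!\left(X_s^x,u(X_s^x),(\nabla u\,\sigma)(X_s^x)\right)$ and rearranging yields precisely the integral identity $(\ref{model-FBSDE-infinite})$ for the pair $(Y^x,Z^x)$, for every $0\le t\le T$. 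Hence $(Y^x,Z^x)$ satisfies the infinite-horizon BSDE.

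Next I would check that $(Y^x,Z^x)\in\mathcal{L}^{2,\lambda}(0,\infty)$, which by Theorem~3.1 of \cite{pardoux1998} identifies it with the \emph{unique} solution. The $Z$-component lies in $\mathbb{L}^{2,\lambda}(0,\infty;\mathbb{R}^{n\times d})$ by the standing hypothesis $\mathbb{E}\big[\int_0^\infty e^{\lambda t}\|(\nabla u\,\sigma)(X_t^x)\|^2\,dt\big]<\infty$. For the $Y$-component I would establish $\mathbb{E}\big[\int_0^\infty e^{\lambda t}|u(X_t^x)|^2\,dt\big]<\infty$ by a weighted a priori estimate: apply It\^o's formula to $s\mapsto e^{\lambda s}|Y_s^x|^2$ on $[0,T\wedge\theta_k]$ for a localizing sequence of stopping times $\theta_k$, take expectations to remove the stochastic integral, and control the drift using the monotonicity inequality $\langle f(x,y_1,z)-f(x,y_2,z),y_1-y_2\rangle\le\mu|y_1-y_2|^2$, the $z$-Lipschitz bound with constant $C_2$, Young's inequality, and the growth bound on $|f(\cdot,0,0)|$ from Assumption~\ref{assumption-BSDE-infinite-solution}. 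The gap condition $\lambda>2\mu+C_2^2$ is exactly what lets the $e^{\lambda s}|Y_s^x|^2$ and $e^{\lambda s}\|Z_s^x\|^2$ terms be absorbed; letting $k\to\infty$ and then $T\to\infty$ delivers the bound (and, en route, $e^{\lambda T}\mathbb{E}|u(X_T^x)|^2\to 0$, so no boundary term survives if one prefers the $T=\infty$ reading of $(\ref{model-FBSDE-infinite})$).

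The crux is this weighted $L^2$ estimate for $Y^x=u(X^x)$: one has no a priori growth control on $u$ itself, so $\mathbb{E}|u(X_T^x)|^2$ cannot be handled directly and must be bootstrapped out of the BSDE identity via the monotonicity/dissipativity structure encoded in $\lambda>2\mu+C_2^2$ --- precisely the mechanism behind the a priori estimates supporting Theorem~3.1 of \cite{pardoux1998}. The remaining points are routine: It\^o's formula applies since $u\in C^2$, the localization removes the local-martingale contribution, and the final step is simply the appeal to uniqueness.
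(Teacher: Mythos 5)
You should first note that the paper offers no proof of this statement: it is quoted verbatim as Theorem 4.1 of \cite{pardoux1998}, so there is no internal argument to compare against. Your overall strategy is nevertheless the standard (and correct) one for this nonlinear Feynman--Kac verification: It\^o's formula applied to $u(X^{x})$ together with the PDE $(\ref{model-pde-infinite horizon})$ shows that $(u(X^{x}_{t}),(\nabla u\,\sigma)(X^{x}_{t}))$ satisfies the integral identity $(\ref{model-FBSDE-infinite})$ on every finite interval, and the conclusion then reduces to checking membership in $\mathcal{L}^{2,\lambda}(0,\infty)$ so that the uniqueness from Theorem 3.1 of \cite{pardoux1998} applies. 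That part of your write-up is fine, as is the observation that the $Z$-component is in the weighted space by hypothesis.

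The gap is in your proposed weighted estimate for the $Y$-component. Applying It\^o's formula to $s\mapsto e^{\lambda s}|Y^{x}_{s}|^{2}$ forward on $[0,T]$ and taking expectations gives
\begin{flalign*}
\mathbb{E}\bigl[e^{\lambda T}|Y^{x}_{T}|^{2}\bigr]=|Y^{x}_{0}|^{2}+\mathbb{E}\int_{0}^{T} e^{\lambda s}\bigl(\lambda|Y^{x}_{s}|^{2}-2\langle Y^{x}_{s},f(X^{x}_{s},Y^{x}_{s},Z^{x}_{s})\rangle+\|Z^{x}_{s}\|^{2}\bigr)\,ds,
\end{flalign*}
and the monotonicity and $z$-Lipschitz bounds of Assumption \ref{assumption-BSDE-infinite-solution} give a \emph{lower} bound of the form $(\lambda-2\mu-C_{2}^{2}-\varepsilon)|Y^{x}_{s}|^{2}-\varepsilon^{-1}|f(X^{x}_{s},0,0)|^{2}$ for the integrand. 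What you actually obtain is therefore $c\,\mathbb{E}\int_{0}^{T} e^{\lambda s}|Y^{x}_{s}|^{2}\,ds\leq \mathbb{E}[e^{\lambda T}|u(X^{x}_{T})|^{2}]+C$: the terminal term sits on the side you need to control, not on the side being absorbed. The gap condition $\lambda>2\mu+C_{2}^{2}$ is dissipative only in the backward direction (from a terminal datum toward $t$), so it cannot deliver $e^{\lambda T}\mathbb{E}|u(X^{x}_{T})|^{2}\to 0$ ``en route''; asserting that is circular, since you have no a priori growth control on $u$. To close the argument one needs extra input, e.g.\ comparing $(u(X^{x}),(\nabla u\,\sigma)(X^{x}))$ with the Theorem 3.1 solution $(\bar{Y},\bar{Z})$ via the backward conditional estimate $e^{\lambda t}|u(X^{x}_{t})-\bar{Y}_{t}|^{2}\leq\mathbb{E}[e^{\lambda T}|u(X^{x}_{T})-\bar{Y}_{T}|^{2}\mid\mathcal{F}_{t}]$ and then killing the right-hand side along a sequence $T_{k}\to\infty$; supplying the control needed for that step is the real content of Pardoux's proof, and it is exactly the ingredient your sketch leaves out.
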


\subsection{BSDEs with finite random terminal time}
Let $G$ be an open bounded subset of $\mathbb{R}^{d}$ with boundary of class $C^{1}$. For each $x\in \bar{G}$, define
the stopping time
$\tau_{x} \equiv \inf \left\{t \geq 0: X^{x}_{t} \notin \bar{G}\right\}$.
Assume that $P(\tau_{x}<\infty)=1$, and for all $x\in \bar{G}$, the set $\Gamma=\{x \in \partial G; P\left(\tau_{x}>0\right)=0\}$ is closed.

Consider the following BSDE
\begin{flalign}
\label{model-FBSDE with stopping time}
Y^{x}_{t}=g\left(X^{x}_{\tau_{x}}\right)+\int_{t \wedge \tau_{x}}^{\tau_{x}} f\left(X_{s}^{x},Y_{s}^{x},Z_{s}^{x}\right) d s-\int_{t \wedge \tau_{x}}^{\tau_{x}} Z^{x}_{s} dW_s,
\end{flalign}
where $g\in C(\mathbb{R}^{d})$, $f$ is continuous and satisfies Assumption $\ref{assumption-BSDE-infinite-solution}$. Assume that for some $\lambda>2\mu+C_{2}^{2}$, $\mathbb{E}\left[\exp \left(\lambda \tau_{x}\right)\right]<\infty$.
Then, the BSDE $(\ref{model-FBSDE with stopping time})$ has a unique solution $\{(Y_{t},Z_{t});t\geq 0\}$ in $\mathcal{L}^{2,\lambda}(0,\tau_{x})$.
Consider the PDE with Dirichlet boundary condition which is of the form
\begin{flalign}
\label{model-PDE-exit time}
\left\{\begin{array}{l}
\mathbb{L} u(x)+f(x, u(x),(\nabla u\sigma)(x))=0, \quad x \in G, \\
u(x)=g(x), \quad x \in \partial G.
\end{array}\right.
\end{flalign}

\begin{lemma}
Let Assumption \ref{assumption-BSDE-infinite-solution} hold.
If $(\ref{model-PDE-exit time})$ has a classical solution $u\in C^{2}(\bar{G};\mathbb{R}^{n})$,
then  $\{u(X_{t}^{x}),(\nabla u\sigma)(X_{t}^{x}); 0\leq t\leq \tau_{x}\} $ is the unique solution of the BSDE $(\ref{model-FBSDE with stopping time})$.
\end{lemma}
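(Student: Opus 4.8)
The plan is to exhibit the pair $\big(u(X^x_{\cdot\wedge\tau_x}),\,(\nabla u\sigma)(X^x_\cdot)\big)$ as a solution of the BSDE $(\ref{model-FBSDE with stopping time})$ and then appeal to the uniqueness statement recorded just above the lemma. As a preliminary step I would extend $u$ to a $C^2$ function $\tilde u$ on all of $\mathbb{R}^d$ (the values outside $\bar G$ being irrelevant, this is standard since $u\in C^2(\bar G)$ and $\bar G$ is compact), so that It\^o's formula is available globally; since $X^x_s\in\bar G$ for every $0\le s\le\tau_x$, only the values of $\tilde u$ on $\bar G$, together with its first two derivatives there, will ever enter.

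Next I would apply It\^o's formula to $s\mapsto\tilde u(X^x_s)$ on the stochastic interval $[\,t\wedge\tau_x,\ \tau_x\wedge m\,]$, obtaining
\[
\tilde u\big(X^x_{\tau_x\wedge m}\big)=\tilde u\big(X^x_{t\wedge\tau_x}\big)+\int_{t\wedge\tau_x}^{\tau_x\wedge m}\mathbb{L}u\big(X^x_s\big)\,ds+\int_{t\wedge\tau_x}^{\tau_x\wedge m}(\nabla u\sigma)\big(X^x_s\big)\,dW_s,
\]
where on the interval of integration $X^x_s\in G$ for $s<\tau_x$, so that $\mathbb{L}\tilde u$ agrees with $\mathbb{L}u$ there. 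Letting $m\to\infty$ and using $P(\tau_x<\infty)=1$, the continuity of the paths of $X^x$, and the martingale property discussed below, the same identity holds with $\tau_x\wedge m$ replaced by $\tau_x$. Now the interior equation in $(\ref{model-PDE-exit time})$ gives $\mathbb{L}u(X^x_s)=-f\big(X^x_s,u(X^x_s),(\nabla u\sigma)(X^x_s)\big)$ for $s<\tau_x$, while the Dirichlet condition gives $\tilde u(X^x_{\tau_x})=u(X^x_{\tau_x})=g(X^x_{\tau_x})$ because $X^x_{\tau_x}\in\partial G$ on $\{\tau_x<\infty\}$. Substituting and rearranging yields exactly $(\ref{model-FBSDE with stopping time})$ with $Y^x_t=u(X^x_{t\wedge\tau_x})$ and $Z^x_t=(\nabla u\sigma)(X^x_t)$ for $t\le\tau_x$.

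It then remains to check that $(Y^x,Z^x)\in\mathcal{L}^{2,\lambda}(0,\tau_x)$. Since $u$ and $\nabla u\,\sigma$ are continuous on the compact set $\bar G$, they are bounded there by some constant $K$, whence
\[
\mathbb{E}\int_0^{\tau_x}\Big(|u(X^x_s)|^2+\|(\nabla u\sigma)(X^x_s)\|^2\Big)e^{\lambda s}\,ds\ \le\ 2K^2\,\mathbb{E}\int_0^{\tau_x}e^{\lambda s}\,ds\ <\ \infty,
\]
the finiteness following from $\mathbb{E}\big[\exp(\lambda\tau_x)\big]<\infty$ (and, in the degenerate case $\lambda\le0$, from $\int_0^{\tau_x}e^{\lambda s}\,ds\le\tau_x$ together with $\mathbb{E}[\tau_x]<\infty$). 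The same bound shows that the stochastic integral above is a genuine martingale, which legitimises the passage $m\to\infty$. Finally, by the uniqueness statement recalled before the lemma, the solution just constructed coincides with the unique solution of $(\ref{model-FBSDE with stopping time})$ in $\mathcal{L}^{2,\lambda}(0,\tau_x)$.

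I expect the main obstacle to be the careful application of It\^o's formula up to the random terminal time $\tau_x$: one must localise by $\tau_x\wedge m$, control the local martingale part via the boundedness of $\nabla u\,\sigma$ on $\bar G$, and then pass to the limit using $P(\tau_x<\infty)=1$ and dominated convergence. A related subtlety is ensuring that the interior PDE and the boundary condition are invoked on the correct events, namely $X^x_s\in G$ for $s<\tau_x$ and $X^x_{\tau_x}\in\partial G$ on $\{\tau_x<\infty\}$, which is where path continuity, the definition of $\tau_x$, and the closedness of $\Gamma$ (handling irregular boundary points) enter.
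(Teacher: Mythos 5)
The paper states this lemma without any proof (it is the Dirichlet analogue of the cited Theorem~4.1 of Pardoux~\cite{pardoux1998}, and the authors simply defer to that reference), so there is no in-paper argument to compare against; your verification argument is the standard one and is essentially correct: It\^o's formula on $[t\wedge\tau_x,\tau_x\wedge m]$, substitution of the interior equation, identification of the terminal value via the Dirichlet condition, an integrability check on $\bar G$, and an appeal to the uniqueness statement recorded before the lemma. Two small points deserve tightening. First, for $s<\tau_x$ you only know $X^x_s\in\bar G$, not $X^x_s\in G$, since $\tau_x$ is the first exit from the \emph{closed} set; the identification $\mathbb{L}u(X^x_s)=-f\bigl(X^x_s,u(X^x_s),(\nabla u\sigma)(X^x_s)\bigr)$ should therefore be justified by noting that the identity $\mathbb{L}u+f(\cdot,u,\nabla u\sigma)=0$, valid on $G$, extends to $\bar G$ by continuity of $u$, its first two derivatives, and $f$. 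Second, in your degenerate case $\lambda\le 0$ the hypothesis $\mathbb{E}[\exp(\lambda\tau_x)]<\infty$ is vacuous and $\mathbb{E}[\tau_x]<\infty$ is \emph{not} granted by the stated assumptions (only $P(\tau_x<\infty)=1$ is); for $\lambda<0$ one instead bounds $\int_0^{\tau_x}e^{\lambda s}ds\le 1/|\lambda|$ deterministically, and the passage $m\to\infty$ in the It\^o identity can be carried out pathwise (for a.e.\ $\omega$ one has $\tau_x\wedge m=\tau_x$ for $m$ large) rather than through an $L^1$ martingale argument. With these adjustments the proof is complete.
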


\begin{remark}
The conditions to guarantee a unique solution $u\in C^{2}$ of the elliptic equation $(\ref{model-PDE-exit time})$  can be referred to Theorem 4.1 in \cite{peng1991}.
\end{remark}

\section{Nonparametric estimation of the coefficients}
In this section, we will construct the nonparametric estimators of $f(X_{t},Y_{t},Z_{t})$ and $Z^{2}_{t}$ in the models $(\ref{model-FBSDE-infinite})$ and $(\ref{model-FBSDE with stopping time})$. We consider the one-dimensional case for simplicity, and similar results can be generalized in multi-dimensional cases under a more complex proof.
Assume $\mathcal{D}=(\bar{l},\bar{r})$ is the range of the process $X_{t}$ and $Y_{t}$ in the two models, where $-\infty \leq \bar{l} \leq \bar{r} \leq \infty$.

In the following, assume that $(\ref{model-pde-infinite horizon})$ and $(\ref{model-PDE-exit time})$ both have a unique solution $u \in  C^{2}$ and  Assumption $\ref{assumption-BSDE-infinite-solution}$ holds.
Then, the solutions of the above BSDEs could be represented as deterministic functions of $X_{t}$. With this conclusion, we denote $-f(X_{t},Y_{t},Z_{t})$ and $Z_{t}$ as  $\tilde{f}(X_{t})$ and $\tilde{Z}(X_{t})$, respectively.
Note that for the diffusion process $\{X_{t}\}$ satisfying $(\ref{model-SDE})$ and a smooth function $Q$, the conditional expected increment can be expressed as
\begin{flalign}
\label{conditional expectation expansion}
\mathbb{E}\left[Q\left(X_{t+\Delta}\right)-Q\left(X_{t}\right) \mid X_{t}\right]=\mathbb{L} Q\left(X_{t}\right) \Delta+\frac{1}{2} \mathbb{L}^{2} Q\left(X_{t}\right) \Delta^{2}+O\left(\Delta^{3}\right).
\end{flalign}
Setting $Q(x)=u(x)-u(X_{t})$ and $\left(u(x)-u(X_{t})\right)^{2}$ in $(\ref{conditional expectation expansion})$, we have
\begin{flalign}
\label{g-expection}
\tilde{f}(X_{t})=\lim\limits_{\Delta \rightarrow 0}\frac{1}{\Delta} \mathbb{E}\left[ (Y_{t+\Delta}-Y_{t}) | X_{t}\right],
\end{flalign}
and
\begin{flalign}
\label{z-expection}
\tilde{Z}^{2}(X_{t})=\lim\limits_{\Delta \rightarrow 0}\frac{1}{\Delta} \mathbb{E}\left[(Y_{t+\Delta}-Y_{t})^{2}| X_{t}\right],
\end{flalign}
respectively for both $(\ref{model-FBSDE-infinite})$ and $(\ref{model-FBSDE with stopping time})$.
Assume the processes $X_{t}$ and $Y_{t}$ in $(\ref{model-FBSDE-infinite})$ and $(\ref{model-FBSDE with stopping time})$ are observed at $\{t=t_{0},t_{1},t_{2},\dots,t_{n}\}$ in the time interval $[0,T]$ with $t_{0}=0$ and $t_{n}=T$, and the observations are equispaced. Then, \\
$\left\{X_{0}, X_{\Delta}, \dots, \allowbreak X_{(n-1)\Delta}, X_{n\Delta}\right\}$, $\left\{Y_{0},Y_{\Delta}, \dots, Y_{(n-1)\Delta}, Y_{n\Delta}\right\}$ are the observations, where $\Delta=T/n$.

Based on the infinitesimal conditional moment restriction (\ref{g-expection}) and (\ref{z-expection}), the common nonparametric regression methods, such as local constant regression, local polynomial regression can be exploited to estimate $\tilde{f}(x)$ and $\tilde{Z}^{2}(x)$ at every spatial point $x$.

\subsection{Nonparametric kernel estimation of the generator $\tilde{f}(x)$}
\subsubsection{Locally constant estimator}
The locally constant estimator of $\tilde{f}(x)$ for $(\ref{model-FBSDE-infinite})$ and $(\ref{model-FBSDE with stopping time})$ is defined as follows,
\begin{flalign}
&\label{g-kernelestimator}
\hat{\tilde{f}}_{\text{NW}}(x):= \frac{N(K,\tilde{f}) }
{D(K)},
\end{flalign}
where $N(K,\tilde{f})=\frac{1}{\Delta}
\sum_{i=0}^{n-1} K\left(\frac{X_{i\Delta}-x}{h}\right)
\left(Y_{(i+1)\Delta}-Y_{i\Delta}\right)$, $D(K)=\sum_{i=0}^{n-1} K\left(\frac{X_{i\Delta}-x}{h}\right)$.
Let
$$\hat{\tilde{f}}_{\text{NW}}(x)=\hat{g}_{p,\text{NW}}(x)+\hat{g}_{q,\text{NW}}(x)+\hat{g}_{r,\text{NW}}(x),$$
where
\begin{equation*}
\hat{g}_{p,\text{NW}}(x)=\tilde{f}(x)+\frac{B(K,\tilde{f})}{D(K)},\ \
\hat{g}_{q,\text{NW}}(x)=\frac{M(K,\tilde{f})}{D(K)},\ \
\hat{g}_{r,\text{NW}}(x)=\frac{R(K,\tilde{f})}{D(K)},
\end{equation*}
with
\begin{flalign*}
& B(K,\tilde{f})=\frac{\Delta}{h} \sum_{i=0}^{n-1} K\left(\frac{X_{i \Delta}-x}{h}\right) \left(\tilde{f}\left(X_{i\Delta}\right)-\tilde{f}(x)\right),    \\
& M(K,\tilde{f})=\frac{1}{h} \sum_{i=0}^{n-1} K\left(\frac{X_{i\Delta}-x}{h}\right)
\int_{i\Delta}^{(i+1)\Delta}Z_{s}dW_{s},    \\
&R(K,\tilde{f})=\frac{1}{h} \sum_{i=0}^{n-1} K\left(\frac{X_{i\Delta}-x}{h}\right)
\int_{i\Delta}^{(i+1)\Delta}\left(\tilde{f}(X_{s})-\tilde{f}(X_{i\Delta})\right)ds.
\end{flalign*}
To give the asymptotic property of the estimator, we make the following assumptions.
\begin{assumption}
\label{assum-kernel function}
The kernel function $K(\cdot)$ is a bounded, twice continuously differentiable, symmetric function with a compact support and for which
$\int_{-\infty}^{\infty}K(u)du=1$, $\int_{-\infty}^{\infty}uK(u)du=0$.
Define $l(K_{1})=\int_{-\infty}^{\infty}u^{2}K(u)du$, $l(K_{2})=\int_{-\infty}^{\infty}K^{2}(u)du$.
\end{assumption}
These conditions can be satisfied by many kernel functions, including the Epanechnikov kernel $K(u)=\frac{3}{4}(1-u^{2})I_{\{|u|\leq 1\}}$, the Quartic kernel $K(u)=\frac{15}{16}(1-u^{2})^{2}I_{\{|u|\leq 1\}}$, where $I$ is the indicator function.

\begin{assumption}
\label{assum-recurrence}
The process $X_{t}$  is recurrent, i.e., the scale function of $X_{t}$
\begin{flalign*}
S(\alpha)=\int_{c}^{\alpha}exp\left(\int_{c}^{y}\frac{-2b(x)}{\sigma^{2}(x)}dx\right)dy,
\end{flalign*}
where $c$ is a generic fixed number belonging to $\mathcal{D}$, satisfies
\begin{flalign*}
\lim\limits_{\alpha\rightarrow \bar{l}}S(\alpha)=-\infty,\ \ \lim\limits_{\alpha\rightarrow \bar{r}}S(\alpha)=\infty.
\end{flalign*}
\end{assumption}
Intuitively, recurrence ensures that the Markov process $X_{t}$ could visit every spatial point $x \in \mathcal{D}$ infinite times with probability one when $T \rightarrow \infty$. The recurrence-related concepts and their applications for statistical inference of stochastic processes can be found in \cite{ait2016}, \cite{bandi2003}. This condition does not imply the existence of a time-invariant distribution for $X_{t}$, therefore, nonstationary is allowed.

To illustrate the asymptotic properties of the estimators, we apply the local time $\ell(T,x)$ of the Markov process $X_{t}$ defined in \cite{ait2016}. It measures the amount of calendar time spent by the process in the neighborhood of $x$. For a recurrent diffusion process, $\ell(T,x)$ diverges as $T\rightarrow \infty$ at every spatial point $x$.
In this paper, $\ell(T,x)$ of the Markov process $X_{t}$ in $(\ref{model-FBSDE-infinite})$ and $(\ref{model-FBSDE with stopping time})$ need to satisfy the following Assumption.

\begin{assumption} 
\label{assum-local time}
$(i)$ There exits $\varepsilon >0$ such that $\bar{\ell}_{\varepsilon}(T, x)=O_{p}\left(\ell(T, x)^{2}\right)$, where $\bar{\ell}_{\varepsilon}(T,x)=\sup_{|x-y|\leq\varepsilon}\ell(T,y)$.   \\
$(ii)$ $\ell(T,x)=O_{p}(a_{T})$ for some nonrandom sequence $(a_{T})$.
\end{assumption}

\begin{assumption}
\label{assumf}
Given $\Delta \rightarrow 0$ and $h \rightarrow 0$ such that    \\
(i) $\Delta^{1/8}/h^{2}=o_{p}(1)$. $\Delta^{1/8}(\log n)^{1/2} =o_{p}(1)$.
$\Delta^{1/8}T_{\bar{H}_{1}} =o_{p}(1)$, where $\bar{H}_{1}=\sigma, Z, Z', \tilde{f}'$.    \\
(ii) $\Delta^{3/4}T_{\bar{H}_{2}} =o_{p}(1)$, where $\bar{H}_{2}=\tilde{f}, b$.
\end{assumption}

\begin{theorem}
\label{thm-g-NW}
Let Assumption $\ref{assum-kernel function}$ - $\ref{assumf}$ hold, we have
\begin{flalign*}
\hat{g}_{p,\text{NW}}(x)=\tilde{f}(x)+h^{2}l(K_{1})\tilde{f}'(x) \frac{s'(x)}{s(x)}+o_{p}(h)+O_{p}(h^{1/2}\ell(T,x)^{-1/2})
\end{flalign*}
uniformly in $T$ as $h\to 0$ and $\Delta\to 0$,  where $s(x)$ is the speed function of the process $X_{t}$ at the generic lever $x$, given by $s(x)=2/S^{'}(x)\sigma^{2}(x)$. And,
\begin{flalign*}
&(h \ell(T,x))^{1/2}\hat{g}_{q,\text{NW}}(x)\Rightarrow l(K_{2})^{1/2}\tilde{Z}(x)\mathbb{N},
\end{flalign*}
where $\mathbb{N}$  is a standard normal random variable. Furthermore,
$\hat{g}_{r,\text{NW}}(x)=o_{p}(h^{2})$.
\end{theorem}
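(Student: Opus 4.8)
The plan is to treat the three pieces of the decomposition $\hat{\tilde f}_{\text{NW}}(x)=\hat g_{p,\text{NW}}(x)+\hat g_{q,\text{NW}}(x)+\hat g_{r,\text{NW}}(x)$ one at a time, the common mechanism being to replace a kernel-weighted Riemann sum over the grid by a continuous-time integral and then, through the occupation-time formula for the recurrent diffusion $X$, by an integral against the local time $\ell(T,\cdot)$, in the spirit of \cite{ait2016,bandi2003}. The decomposition itself is just $(\ref{model-FBSDE-infinite})$--$(\ref{model-FBSDE with stopping time})$ read on a single cell together with the identifications $Z_s=\tilde Z(X_s)$ and $-f(X_s,Y_s,Z_s)=\tilde f(X_s)$: from
\begin{align*}
Y_{(i+1)\Delta}-Y_{i\Delta}&=\tilde f(x)\Delta+\big(\tilde f(X_{i\Delta})-\tilde f(x)\big)\Delta\\
&\quad+\int_{i\Delta}^{(i+1)\Delta}\big(\tilde f(X_s)-\tilde f(X_{i\Delta})\big)\,ds+\int_{i\Delta}^{(i+1)\Delta}Z_s\,dW_s,
\end{align*}
multiplying by $K((X_{i\Delta}-x)/h)$, summing over $i$, and dividing by $D(K)$, one reads off $\tilde f(x)+B(K,\tilde f)/D(K)$, $M(K,\tilde f)/D(K)$ and $R(K,\tilde f)/D(K)$, respectively. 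As a preliminary I would show, uniformly in $T$, that $D(K)\to\ell(T,x)$: the Riemann-sum error is absorbed by the moment conditions in Assumption \ref{assumf}, and then $\tfrac1h\int_0^T K(\tfrac{X_s-x}{h})g(X_s)\,ds=\int K(u)g(x+hu)\,\ell(T,x+hu)\,du\to g(x)\,\ell(T,x)$ by the occupation-time formula and Assumption \ref{assum-local time}(i). This identity, used with several choices of $g$, is the workhorse for all three terms.

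For $\hat g_{p,\text{NW}}$ apply the workhorse with $g=\tilde f(\cdot)-\tilde f(x)$. A second-order Taylor expansion of $\tilde f$ around $x$, together with the first-order spatial behaviour of the occupation density, gives an $O_p(h^2\ell(T,x))$ contribution to $B(K,\tilde f)$ whose leading piece is $h^2 l(K_1)\big(\tilde f'(x)\,\partial_x\ell(T,x)+\tfrac12\tilde f''(x)\,\ell(T,x)\big)$, the first-order term in $u$ vanishing by the symmetry of $K$ in Assumption \ref{assum-kernel function}. Dividing by $D(K)\sim\ell(T,x)$ and using that the spatial profile of the occupation density of a recurrent diffusion is asymptotically proportional to the speed function, so that $\partial_x\ell(T,x)/\ell(T,x)\to s'(x)/s(x)$, produces the displayed design-bias term $h^2 l(K_1)\tilde f'(x)s'(x)/s(x)$; everything else (the second-order smoothing term and higher-order terms) is $o_p(h)$, while the fluctuation of the occupation approximation around its local-time value --- of order $h^{1/2}\ell(T,x)^{-1/2}$, again controlled by Assumptions \ref{assum-local time}(i) and \ref{assumf} --- gives the $O_p(h^{1/2}\ell(T,x)^{-1/2})$ remainder, uniformly in $T$.

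For $\hat g_{q,\text{NW}}=M(K,\tilde f)/D(K)$, write $M(K,\tilde f)=\sum_{i=0}^{n-1}\xi_i$ with $\xi_i=\tfrac1h K(\tfrac{X_{i\Delta}-x}{h})\int_{i\Delta}^{(i+1)\Delta}Z_s\,dW_s$, a martingale-difference array for $(\mathcal{F}_{i\Delta})$. Its predictable quadratic variation is $\sum_i\tfrac1{h^2}K^2(\tfrac{X_{i\Delta}-x}{h})\,\mathbb{E}\big[\int_{i\Delta}^{(i+1)\Delta}\tilde Z^2(X_s)\,ds\bigm|\mathcal{F}_{i\Delta}\big]$, which by the workhorse (integrand $\tilde Z^2$, weight $K^2$) equals $\tfrac{l(K_2)}{h}\tilde Z^2(x)\,\ell(T,x)\,(1+o_p(1))$; a fourth-moment bound, reduced by Burkholder--Davis--Gundy to a requirement of the form $\Delta=o_p(h\,\ell(T,x))$ (valid under Assumption \ref{assumf}), verifies the Lindeberg condition, so the martingale central limit theorem with diverging quadratic variation yields $M(K,\tilde f)\big/\big(\tfrac{l(K_2)}{h}\tilde Z^2(x)\ell(T,x)\big)^{1/2}\Rightarrow\mathbb{N}$. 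Dividing by $D(K)\sim\ell(T,x)$ and multiplying by $(h\ell(T,x))^{1/2}$ gives $(h\ell(T,x))^{1/2}\hat g_{q,\text{NW}}(x)\Rightarrow l(K_2)^{1/2}\tilde Z(x)\,\mathbb{N}$, the sign of $\tilde Z(x)$ being immaterial since $\mathbb{N}\stackrel{d}{=}-\mathbb{N}$. For $\hat g_{r,\text{NW}}=R(K,\tilde f)/D(K)$, bound each cell contribution by $\Delta\sup_{s\in[i\Delta,(i+1)\Delta]}|\tilde f(X_s)-\tilde f(X_{i\Delta})|$ and use the continuity of $\tilde f$ and $\mathbb{E}[\sup_s|X_s-X_{i\Delta}|^2\mid\mathcal{F}_{i\Delta}]=O(\Delta)$; this gives $\mathbb{E}|R(K,\tilde f)|=O(\Delta^{1/2}\ell(T,x))$, so $\hat g_{r,\text{NW}}(x)=O_p(\Delta^{1/2})=o_p(h^2)$ under Assumption \ref{assumf} (splitting $\tilde f(X_s)-\tilde f(X_{i\Delta})$ into drift and martingale parts sharpens this to $O_p(\Delta)$, but is not needed).

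I expect the main obstacle to be the two places where the occupation-time reduction must be carried one order past the leading term --- the appearance of $\partial_x\ell(T,x)$ in $B(K,\tilde f)$ and its identification with $s'(x)/s(x)$, and the convergence of the normalized predictable quadratic variation in $\hat g_{q,\text{NW}}$ --- both of which require controlling the spatial oscillation of $\ell(T,\cdot)$ in a shrinking neighbourhood of $x$ \emph{uniformly in $T$}, where only the mild bound $\bar\ell_\varepsilon(T,x)=O_p(\ell(T,x)^2)$ of Assumption \ref{assum-local time}(i) is available and the occupation density is merely continuous, not differentiable. Interwoven with this is the bookkeeping that every discretization error --- replacing a Riemann sum by a time integral, or $\tilde Z^2(X_s)$ by $\tilde Z^2(X_{i\Delta})$ within a cell --- is uniformly $o_p$ of the relevant leading term, which is exactly what the moment conditions in Assumption \ref{assumf} are designed to deliver; once the quadratic variation and the Lindeberg condition are in hand, the martingale central limit theorem itself is routine.
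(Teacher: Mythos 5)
Your decomposition of $Y_{(i+1)\Delta}-Y_{i\Delta}$ on each cell, your treatment of the bias term via the occupation-time formula and a spatial expansion of the local time, and your bound on $\hat g_{r,\text{NW}}$ all match the paper's route: the paper proves Theorem \ref{thm-g-NW} only by reference to the proof of Theorem \ref{thm-z-NW}, which runs exactly along these lines, with Lemmas \ref{lemma-discrete-continuous-K}--\ref{lemmaukww} doing the Riemann-sum-to-local-time bookkeeping you describe. You also correctly identify the non-differentiability of $x\mapsto\ell(T,x)$ as the delicate point behind the $s'(x)/s(x)$ bias term and the $O_{p}(h^{1/2}\ell(T,x)^{-1/2})$ fluctuation; the paper simply delegates this to Lemma \ref{lemmakww} (itself stated without proof), so on this point you are no less rigorous than the source.

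The genuine gap is in the CLT for $\hat g_{q,\text{NW}}$. You invoke ``the martingale central limit theorem with diverging quadratic variation'' after computing that the predictable quadratic variation is $\tfrac{l(K_{2})}{h}\tilde Z^{2}(x)\ell(T,x)(1+o_{p}(1))$ and checking a Lindeberg condition. But in the nonstationary/null-recurrent regime the paper explicitly allows, $\ell(T,x)$ normalized by any deterministic sequence $a_{T}$ converges to a nondegenerate \emph{random} limit, so the normalized conditional variance of your array does not converge in probability to a constant and the classical Lindeberg MCLT does not apply. The claimed limit $(h\ell(T,x))^{1/2}\hat g_{q,\text{NW}}(x)\Rightarrow l(K_{2})^{1/2}\tilde Z(x)\mathbb{N}$ is a self-normalized, mixed-normal statement that additionally requires $\mathbb{N}$ to be asymptotically independent of the random normalizer $\ell(T,x)$; this is not automatic, since by Dambis--Dubins--Schwarz $M_{T}=\beta_{[M]_{T}}$ for some Brownian motion $\beta$ and $\beta_{\tau}/\sqrt{\tau}$ need not be asymptotically standard normal for a general random time $\tau\to\infty$. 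The paper supplies exactly this missing ingredient in the proof of Theorem \ref{thm-z-NW}: it embeds the sum as the terminal value of a continuous martingale $M$ and verifies, besides the quadratic-variation asymptotics, the covariation conditions $[W,M]_{T}/[M]_{T}=o_{p}(h^{1/2})$ and $[W,M]_{T}/[W]_{T}=o_{p}(h^{1/2}T^{-1})$, which are the hypotheses of the asymptotic mixed-normality results of \cite{ait2016} and \cite{Kim2017} and deliver the asymptotic independence of the martingale from the local time. Your proposal contains no analogue of this step, and without it the convergence of $M(K,\tilde f)/[M]_{T}^{1/2}$ to a standard normal does not follow in the generality claimed (it would suffice only in the positive recurrent case, where $\ell(T,x)/T$ converges to a constant).
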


\subsubsection{Locally linear estimator}
The locally linear estimator of $\tilde{f}(x)$ for $(\ref{model-FBSDE-infinite})$ and $(\ref{model-FBSDE with stopping time})$ is given by,
\begin{flalign}
&\label{g-kernelestimatorll}
\hat{\tilde{f}}_{\text{LL}}(x):= \frac{N(K,\tilde{f})D(K_{2})-N(K_{1},\tilde{f})D(K_{1})}
{D(K)D(K_{2})-D(K_{1})^2},
\end{flalign}
where $N(K_{1},\tilde{f})$  is defined as $N(K,\tilde{f})$,  $D(K_{1})$ and $D(K_{2})$ as $D(K)$, only with $K$ replaced by $K_1$ and $K_2$, $K_1(x) =xK(x)$ and $K_2(x)= x^2K(x)$. Write
$$\hat{\tilde{f}}_{\text{LL}}(x)=\hat{g}_{p,\text{LL}}(x)+\hat{g}_{q,\text{LL}}(x)+\hat{g}_{r,\text{LL}}(x),$$
where
\begin{flalign*}
&\hat{g}_{p,\text{LL}}(x)=\tilde{f}(x)+\frac{B(K,\tilde{f})D(K_{2})-B(K_{1},\tilde{f})D(K_{1})}{D(K)D(K_{2})-D(K_{1})^2},\\
&\hat{g}_{q,\text{LL}}(x)=\frac{M(K,\tilde{f})D(K_{2})-M(K_{1},\tilde{f})D(K_{1})}{D(K)D(K_{2})-D(K_{1})^2}, \\
&\hat{g}_{r,\text{LL}}(x)=\frac{R(K,\tilde{f})D(K_{2})-R(K_{1},\tilde{f})D(K_{1})}{D(K)D(K_{2})-D(K_{1})^2}.
\end{flalign*}

\begin{theorem}
\label{thm-g-LL}
Let Assumption $\ref{assum-kernel function}$ - $\ref{assumf}$ hold, we have
\begin{flalign*}
\hat{g}_{p,\text{LL}}(x)=\tilde{f}(x)+o_{p}(h)+O_{p}(h^{1/2}\ell(T,x)^{-1/2})
\end{flalign*}
uniformly in $T$ as $h\to 0$ and $\Delta\to 0$, and,
\begin{flalign*}
&~~(h \ell(T,x))^{1/2}\hat{g}_{q,\text{LL}}(x)\Rightarrow l(K_{2})^{1/2}\tilde{Z}(x)\mathbb{N}.
\end{flalign*}
Furthermore,
$\hat{g}_{r,\text{LL}}(x)=o_{p}(h^{2})$.
\end{theorem}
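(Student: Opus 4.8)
The plan is to mirror, term by term, the argument behind Theorem~\ref{thm-g-NW}, exploiting the fact that the local linear estimator is an ``equivalent kernel'' smoother whose weights annihilate the linear term exactly, which is precisely what kills the leading design bias present in the Nadaraya--Watson case. As in the statement, write $\hat{\tilde f}_{\mathrm{LL}}=\hat g_{p,\mathrm{LL}}+\hat g_{q,\mathrm{LL}}+\hat g_{r,\mathrm{LL}}$; these three pieces isolate, respectively, the smoothing bias, the Brownian martingale fluctuation, and the discretization remainder, so it suffices to analyze them separately. Throughout I would use the occupation-time/local-time machinery already invoked for Theorem~\ref{thm-g-NW}: the Riemann-sum approximation $\Delta\,D(K_j)=\int_0^T (K_j)\big((X_s-x)/h\big)\,\mathrm ds+(\text{error})$, with the error controlled by the rate conditions of Assumption~\ref{assumf} and the local-time bounds of Assumption~\ref{assum-local time}, together with the occupation-time formula and a Taylor expansion of the local time $\ell(T,\cdot)$ in the spatial variable.

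The first step is to pin down the denominator. The above approximations give $\Delta\,D(K)/h=\ell(T,x)\big(1+o_p(1)\big)$ and $\Delta\,D(K_2)/h=\ell(T,x)\,l(K_1)\big(1+o_p(1)\big)$, while $\Delta\,D(K_1)/h=O_p\big(h\,\ell(T,x)\big)$ because $\int uK(u)\,\mathrm du=0$ — the surviving contribution there being the ``design-density slope'' proportional to $s'(x)/s(x)$. Hence $D(K)D(K_2)-D(K_1)^2=(h/\Delta)^2\ell(T,x)^2\,l(K_1)\big(1+o_p(1)\big)$ and is eventually positive. For $\hat g_{p,\mathrm{LL}}$ I would then Taylor-expand $\tilde f(X_{i\Delta})-\tilde f(x)$ to second order inside $B(K,\tilde f)$ and $B(K_1,\tilde f)$: the first-order term contributes, after normalization by $D(K)$, exactly the bias $h^2 l(K_1)\tilde f'(x)\,s'(x)/s(x)$ appearing in Theorem~\ref{thm-g-NW}, but in the combination $B(K,\tilde f)D(K_2)-B(K_1,\tilde f)D(K_1)$ this term is cancelled by the $D(K_1)$-piece (equivalently, the equivalent kernel has vanishing first moment against $X_{i\Delta}-x$), so the remaining bias is of order $O_p(h^2)=o_p(h)$; adding the $O_p\big(h^{1/2}\ell(T,x)^{-1/2}\big)$ stochastic error carried over from the occupation-sum approximations yields the stated expansion for $\hat g_{p,\mathrm{LL}}$.

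For $\hat g_{q,\mathrm{LL}}$, observe that $M(K,\tilde f)$ and $M(K_1,\tilde f)$ are sums of martingale differences relative to $\{\mathcal F_{i\Delta}\}$, since $K\big((X_{i\Delta}-x)/h\big)$ is $\mathcal F_{i\Delta}$-measurable and $\int_{i\Delta}^{(i+1)\Delta}Z_s\,\mathrm dW_s$ has vanishing $\mathcal F_{i\Delta}$-conditional mean; the $D(K_1)$-weighted piece is of smaller order by the denominator analysis, so $\hat g_{q,\mathrm{LL}}=M(K,\tilde f)/D(K)+o_p\big((h\ell(T,x))^{-1/2}\big)=\hat g_{q,\mathrm{NW}}+o_p\big((h\ell(T,x))^{-1/2}\big)$. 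A stable (mixed-normal) central limit theorem for $\sum_i K\big((X_{i\Delta}-x)/h\big)\int_{i\Delta}^{(i+1)\Delta}Z_s\,\mathrm dW_s$, whose $\mathcal F_{i\Delta}$-conditional variance equals $\sum_i K^2\big((X_{i\Delta}-x)/h\big)\,\mathbb E\big[\int_{i\Delta}^{(i+1)\Delta}Z_s^2\,\mathrm ds\mid\mathcal F_{i\Delta}\big]\approx l(K_2)\,\tilde Z^2(x)\,h\,\ell(T,x)/\Delta^2$ by the occupation-time formula, then gives $(h\ell(T,x))^{1/2}\hat g_{q,\mathrm{LL}}\Rightarrow l(K_2)^{1/2}\tilde Z(x)\,\mathbb N$ with $\mathbb N$ independent of $\ell(T,\cdot)$. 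Finally, $\hat g_{r,\mathrm{LL}}$ is handled as in the Nadaraya--Watson case: by It\^o's formula $\tilde f(X_s)-\tilde f(X_{i\Delta})=\int_{i\Delta}^s\mathbb L\tilde f(X_u)\,\mathrm du+\int_{i\Delta}^s(\tilde f'\sigma)(X_u)\,\mathrm dW_u$, so $\int_{i\Delta}^{(i+1)\Delta}(\tilde f(X_s)-\tilde f(X_{i\Delta}))\,\mathrm ds$ is $O_p(\Delta^2)$ in mean and $O_p(\Delta^{3/2})$ in its stochastic part; summing, using boundedness of $K$ on its support and the local-time bounds, and invoking the $\Delta^{3/4}T_{\tilde f,b}$ and $\Delta^{1/8}T_{\sigma,Z,Z',\tilde f'}$ conditions of Assumption~\ref{assumf}, shows $R(K,\tilde f)$ and $R(K_1,\tilde f)$ are small enough that $\hat g_{r,\mathrm{LL}}=o_p(h^2)$.

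I expect the main obstacle to be the combination of the denominator expansion and the bias cancellation: one must track the $h$-expansions of the weighted occupation sums $D(K_j)$ and $B(K_j,\tilde f)$ one order beyond the leading term — which demands a careful treatment of the spatial regularity of the local time $\ell(T,\cdot)$ through Assumption~\ref{assum-local time} and uniform control of both the Riemann-sum error and the $\tilde f(X_s)-\tilde f(X_{i\Delta})$ discretization error through Assumption~\ref{assumf} — and then verify that the $D(K_1)$-weighted contributions cancel the $s'/s$ design bias \emph{exactly}, not merely to leading order. The secondary technical point is that the central limit theorem in the martingale step must be the stable/mixed-normal version, because the normalization $(h\ell(T,x))^{1/2}$ is random in the recurrent, possibly nonstationary setting; checking the Lindeberg and conditional-variance conditions under this random norming (and that the $D(K_1)$-cross terms are genuinely negligible at this scale) is where most of the remaining work lies.
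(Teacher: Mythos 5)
Your proposal is correct and follows essentially the same route as the paper: expand $D(K)$, $D(K_1)$, $D(K_2)$ and $B(K,\tilde f)$, $B(K_1,\tilde f)$ via the occupation-time/local-time lemmas, observe that the $l(K_1)^2\tilde f'(x)\frac{s'(x)}{s(x)}h^2\ell(T,x)^2$ terms in $B(K,\tilde f)D(K_2)$ and $B(K_1,\tilde f)D(K_1)$ cancel exactly, show $M(K_1,\tilde f)D(K_1)$ is negligible so the martingale part reduces to the Nadaraya--Watson one, and bound the remainder through the modulus of continuity $\kappa_2$ under Assumption~\ref{assumf}. The only cosmetic difference is that you invoke a discrete-time stable martingale CLT where the paper embeds the sum into a continuous martingale and controls $[M]_T$ and $[W,M]_T$; both yield the same mixed-normal limit.
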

\begin{remark}
Note that  $\hat{\tilde{f}}_{\text{NW}}(x)$ and $\hat{\tilde{f}}_{\text{LL}}(x)$  converge only if
$h\ell(T,x) \stackrel{p}{\longrightarrow} \infty$. As we all know, for a recurrent process, we have $\ell(T,x) \to \infty$ as $T \to \infty$ at each $x\in \mathcal{D}$, $\ell(T,x)$ does not diverge for the transient process. Therefore, if the process $X_{t}$ is recurrent, the local estimator of the generator $f(x,u(x),v(x))$ for the infinite horizon BSDE $(\ref{model-FBSDE-infinite})$ is consistent. This does not apply to the BSDE $(\ref{model-FBSDE with stopping time})$.
\end{remark}

\subsection{Nonparametric kernel estimation of $\tilde{Z}^{2}(x)$}
\subsubsection{Locally constant estimator}
The locally constant estimator for $\tilde{Z}^{2}(x)$ is given by
\begin{flalign}
&\label{z-kernelestimatorll}
\hat{\tilde{Z}}^{2}_{\text{NW}}(x):=\frac{N(K,Z)} {D(K) },
\end{flalign}
where
\begin{flalign*}
N(K,Z)=\frac{1}{\Delta}
\sum_{i=0}^{n-1} K\left(\frac{X_{i\Delta}-x}{h}\right) \left(Y_{(i+1)\Delta}-Y_{i\Delta} \right)^{2}.
\end{flalign*}
Write
\begin{flalign*}
\hat{\tilde{Z}}^{2}_{\text{NW}}(x)=\hat{Z}^{2}_{p,\text{NW}}(x)+\hat{Z}^{2}_{q,\text{NW}}(x)+\hat{Z}^{2}_{r,\text{NW}}(x),
\end{flalign*}
where
\begin{flalign*}
\hat{Z}^{2}_{p,\text{NW}}(x)=\tilde{Z}^{2}(x)+\frac{ B(K,Z)  }{D(K)},\quad
\hat{Z}^{2}_{q,\text{NW}}(x)=\frac{M(K,Z)  }{D(K)},\quad
\hat{Z}^{2}_{r,\text{NW}}(x)=\frac{R(K,Z)  }{D(K)}+\frac{ S(K)  }{D(K)},
\end{flalign*}
with
\begin{flalign*}
&B(K,Z)=\frac{\Delta}{h} \sum_{i=0}^{n-1} K\left(\frac{X_{i\Delta}-x}{h}\right) \left(\tilde{Z}^{2}(X_{i\Delta})-\tilde{Z}^{2}(x)\right),   \\
&M(K,Z)=\frac{1}{h} \sum_{i=0}^{n-1} K\left(\frac{X_{i\Delta}-x}{h}\right) \int_{i\Delta}^{(i+1)\Delta}2(Y_{t}-Y_{i\Delta})Z_{t}dW_{t} ,   \\
&R(K,Z)=\frac{1}{h} \sum_{i=0}^{n-1} K\left(\frac{X_{i\Delta}-x}{h}\right) \int_{i\Delta}^{(i+1)\Delta}\left(\tilde{Z}^{2}(X_{t})-\tilde{Z}^{2}(X_{i\Delta})\right)dt, \\
&S(K)=\frac{1}{h} \sum_{i=0}^{n-1} K\left(\frac{X_{i\Delta}-x}{h}\right) \int_{i\Delta}^{(i+1)\Delta}2(Y_{t}-Y_{i\Delta})\tilde{f}(X_{t})dt.
\end{flalign*}

\begin{assumption}
\label{assumz}
Given $\Delta \rightarrow 0$ and $h \rightarrow 0$ such that    \\
(i) $\Delta^{1/8}/h^2=o_{p}(1)$. $\Delta^{1/8}(\log n)^{1/2}=o_{p}(1)$.
$\Delta^{1/8}T_{H_{1}} =o_{p}(1)$, where $H_{1}=\sigma, \tilde{Z}, \tilde{Z}', (\tilde{Z}^2)'$.    \\
(ii) $\Delta^{1/4}T_{H_{2}} =o_{p}(1)$, where $H_{2}=\tilde{f}, |\tilde{f}|'$.    \\
(iii) $\Delta^{1/2}T_{b} =o_{p}(1)$.
\end{assumption}
\begin{theorem}
\label{thm-z-NW}
Let  Assumption $\ref{assum-kernel function}$ - $\ref{assum-local time}$, $\ref{assumz}$ hold. We have
\begin{flalign}
\label{proofthmzp}
\hat{Z}^{2}_{p,\text{NW}}(x)=\tilde{Z}^{2}(x)+h^{2}K_{1}(\tilde{Z}^{2})'(x) \frac{s'(x)}{s(x)}+o_{p}(h)+O_{p}(h^{1/2}\ell(T,x)^{-1/2})
\end{flalign}
uniformly in $T$ as $\Delta \rightarrow 0$ and $h \rightarrow 0$,  and,
\begin{flalign*}
\left(\frac{h\ell(T,x)}{\Delta}\right)^{1/2}\hat{Z}^{2}_{q,\text{NW}}(x)\Rightarrow \sqrt{2}l(K_{2})^{1/2}\tilde{Z}^{2}(x)\mathbb{N}.
\end{flalign*}
Moreover,
$\hat{Z}^{2}_{r,\text{NW}}(x)=o_{p}(h^{2})$.
\end{theorem}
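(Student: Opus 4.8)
The plan is to follow the route already used for Theorem~\ref{thm-g-NW}: after a single It\^o expansion of the \emph{squared} increment $(Y_{(i+1)\Delta}-Y_{i\Delta})^{2}$, everything reduces to the occupation-density and local-time asymptotics of \cite{bandi2003} and \cite{ait2016} applied to the three pieces $\hat{Z}^{2}_{p,\text{NW}}$, $\hat{Z}^{2}_{q,\text{NW}}$, $\hat{Z}^{2}_{r,\text{NW}}$. First I would record the basic decomposition. From the BSDE and the representation $Z_s=\tilde{Z}(X_s)$ one has $Y_{(i+1)\Delta}-Y_{i\Delta}=\int_{i\Delta}^{(i+1)\Delta}\tilde{f}(X_s)\,ds+\int_{i\Delta}^{(i+1)\Delta}Z_s\,dW_s$, and It\^o's formula applied to $t\mapsto(Y_t-Y_{i\Delta})^{2}$ on $[i\Delta,(i+1)\Delta]$ gives
\begin{align*}
(Y_{(i+1)\Delta}-Y_{i\Delta})^{2}&=\Delta\,\tilde{Z}^{2}(X_{i\Delta})+\int_{i\Delta}^{(i+1)\Delta}\big(\tilde{Z}^{2}(X_s)-\tilde{Z}^{2}(X_{i\Delta})\big)\,ds\\
&\quad+\int_{i\Delta}^{(i+1)\Delta}2(Y_s-Y_{i\Delta})\tilde{f}(X_s)\,ds+\int_{i\Delta}^{(i+1)\Delta}2(Y_s-Y_{i\Delta})Z_s\,dW_s .
\end{align*}
Multiplying by $\tfrac1\Delta K\!\left(\tfrac{X_{i\Delta}-x}{h}\right)$, summing over $i$ and dividing by $D(K)$, the four right-hand terms produce respectively $\tilde{Z}^{2}(x)+B(K,Z)/D(K)$, $R(K,Z)/D(K)$, $S(K)/D(K)$ and $M(K,Z)/D(K)$, i.e.\ the decomposition $\hat{\tilde{Z}}^{2}_{\text{NW}}=\hat{Z}^{2}_{p,\text{NW}}+\hat{Z}^{2}_{q,\text{NW}}+\hat{Z}^{2}_{r,\text{NW}}$ stated before the theorem; the cross term $S(K)$ is the only feature absent from the generator analysis.

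Next, for the bias term $\hat{Z}^{2}_{p,\text{NW}}$ I would run the occupation-density expansion. Each Riemann sum $\sum_i\Delta\,K\!\left(\tfrac{X_{i\Delta}-x}{h}\right)\varphi(X_{i\Delta})$ is replaced by the occupation integral $\int_0^T K\!\left(\tfrac{X_t-x}{h}\right)\varphi(X_t)\,dt=\int K\!\left(\tfrac{y-x}{h}\right)\varphi(y)\,\ell(T,y)\,dy$, the discretization error being dominated via Assumption~\ref{assum-local time}(i) and the controls $\Delta^{1/8}T_{H_1}=o_p(1)$ of Assumption~\ref{assumz}(i); one then sets $y=x+hu$, uses the recurrent-diffusion local-time ratio $\ell(T,x+hu)/\ell(T,x)=s(x+hu)/s(x)+o_p(1)$ (Assumption~\ref{assum-recurrence}), and Taylor-expands $\varphi=\tilde{Z}^{2}-\tilde{Z}^{2}(x)$ and $s$ about $x$. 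Symmetry of $K$ (Assumption~\ref{assum-kernel function}) annihilates the $O(h)$ contribution and leaves the announced $h^{2}l(K_{1})(\tilde{Z}^{2})'(x)s'(x)/s(x)$ bias, with the residual $O(h^{2})$ curvature term and all higher-order contributions swept into $o_p(h)$; the $O(\Delta^{2})$ discrepancy in $\mathbb{E}[(Y_{(i+1)\Delta}-Y_{i\Delta})^{2}\mid X_{i\Delta}]-\Delta\,\tilde{Z}^{2}(X_{i\Delta})$ coming from $(\ref{conditional expectation expansion})$ is likewise absorbed into $o_p(h)$, while the centred fluctuation of the numerator about its local-time representation, divided by $D(K)\asymp h\,\ell(T,x)/\Delta$, supplies the $O_p\big(h^{1/2}\ell(T,x)^{-1/2}\big)$ term. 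Since the bounds are expressed only through $h$ and $\ell(T,x)$, the expansion is uniform in $T$.

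For the stochastic term, $M(K,Z)$ is a sum of $\mathcal{F}_{(i+1)\Delta}$-martingale differences; since $Y_s-Y_{i\Delta}$ is Brownian-dominated on each block, $\mathbb{E}\big[\int_{i\Delta}^{(i+1)\Delta}4(Y_s-Y_{i\Delta})^{2}Z_s^{2}\,ds\mid\mathcal{F}_{i\Delta}\big]=2\,\tilde{Z}^{4}(X_{i\Delta})\Delta^{2}+o_p(\Delta^{2})$, so the same occupation-density asymptotics give $\big(h\ell(T,x)/\Delta\big)^{1/2}\hat{Z}^{2}_{q,\text{NW}}(x)$ the asymptotic variance $2\,l(K_{2})\tilde{Z}^{4}(x)$, which is the variance of $\sqrt{2}\,l(K_{2})^{1/2}\tilde{Z}^{2}(x)\,\mathbb{N}$; a Lyapunov/Lindeberg verification (boundedness of $K$ together with the moment controls $\Delta^{1/8}T_{H_1}=o_p(1)$) and a stable martingale central limit theorem in the spirit of \cite{bandi2003}, to accommodate the random normalization $\ell(T,x)$, then yield the mixed-normal limit. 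For the remainder $\hat{Z}^{2}_{r,\text{NW}}$, in both $R(K,Z)$ and $S(K)$ the inner spatial increment is $O_p(\Delta^{1/2})$, so each block integral is $O_p(\Delta^{3/2})$; summing and dividing by $D(K)$ leaves $O_p(\Delta)=o_p(h^{2})$ under Assumption~\ref{assumz}(i)--(iii).

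I expect the hard part to be the stochastic term: establishing \emph{stable} convergence under the random, path-dependent normalization $\ell(T,x)$ and checking the negligibility and Lindeberg conditions in the genuinely non-stationary regime, where $\sup_{0\le t\le T}|X_t|$ may diverge --- this is exactly why Assumption~\ref{assumz} couples $\Delta$ to the path suprema $T_{H}$. A close second is the discretization control for the squared increments, which is heavier than in Theorem~\ref{thm-g-NW} owing to the extra cross term $S(K)$; the staggered rates $\Delta^{1/8}T_{H_1}$, $\Delta^{1/4}T_{H_2}$ and $\Delta^{1/2}T_{b}$ in Assumption~\ref{assumz} are tuned precisely for this.
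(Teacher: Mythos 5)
Your proposal is correct in substance and follows essentially the same route as the paper: the same It\^o decomposition of $(Y_{(i+1)\Delta}-Y_{i\Delta})^{2}$ into the four pieces generating $B(K,Z)$, $R(K,Z)$, $S(K)$ and $M(K,Z)$, the same occupation-density/local-time expansion (the paper's Lemmas \ref{lemma-K*g-discrete-continuous} and \ref{lemmakww}) for the bias, and the same per-block conditional-variance computation $2\tilde{Z}^{4}(X_{i\Delta})\Delta^{2}$ that produces the factor $\sqrt{2}$ in the limit. The one genuine divergence is the central limit step: you propose a discrete martingale-difference-array argument with a Lyapunov/Lindeberg check and an appeal to a stable CLT, whereas the paper embeds $M(K,Z)$ into a continuous martingale $M$, computes $[M]_{T}=\tilde{Z}^{4}(x)l(K_{2})\ell(T,x)(1+o_{p}(1))$ term by term (the $A_{1},\dots,A_{5}$ decomposition), and then establishes the mixed-normal limit by showing $[W,M]_{T}/[M]_{T}=o_{p}(h^{1/2})$ and $[W,M]_{T}/[W]_{T}=o_{p}(h^{1/2}T^{-1})$, i.e.\ asymptotic orthogonality to the driving Brownian motion in the style of A\"{i}t-Sahalia--Park and Kim--Park. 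The paper's route is the more concrete answer to the difficulty you correctly flag (stable convergence under the random normalization $\ell(T,x)$): the explicit control of $[W,M]_{T}$ is precisely what replaces the Lindeberg verification, so if you carry out your plan you should expect to compute that bracket rather than check a Lindeberg condition. One small arithmetic slip: for the remainder, each block integral in $R(K,Z)$ and $S(K)$ is indeed $O_{p}(\Delta^{3/2})$ up to path suprema and a $(\log n)^{1/2}$ factor, but after the per-block normalization by $\Delta$ this yields $O_{p}\bigl(\Delta^{1/2}(\log n)^{1/2}\cdot(\text{path suprema})\bigr)$, not $O_{p}(\Delta)$; the conclusion $o_{p}(h^{2})$ still follows from Assumption \ref{assumz}(i)--(iii), which is exactly how the paper closes this step via the bound $\kappa_{2}=O_{p}(\Delta T_{b})+O_{p}(T_{\sigma}(\Delta\log n)^{1/2})$.
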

\subsubsection{Locally linear estimator}
The locally linear estimator of $\tilde{Z}(x)$ for $(\ref{model-FBSDE-infinite})$ and $(\ref{model-FBSDE with stopping time})$ is defined as follows,
\begin{flalign}
&\label{g-kernelestimatorll}
~~\hat{\tilde{Z}}_{\text{LL}}(x):= \frac{N(K,\tilde{Z})D(K_{2})-N(K_{1},\tilde{Z})D(K_{1})}
{D(K)D(K_{2})-D(K_{1})^2},
\end{flalign}
where $N(K_{1},\tilde{Z})$  is defined as $N(K,\tilde{Z})$ with $K$ replaced by $K_1$. Write
$$\hat{\tilde{Z}}^{2}_{\text{LL}}(x)
=\hat{Z}^{2}_{p,\text{LL}}(x)+\hat{Z}^{2}_{q,\text{LL}}(x)+\hat{Z}^{2}_{r,\text{LL}}(x),$$
where
\begin{flalign*}
&\hat{Z}^{2}_{p,\text{LL}}(x)=\tilde{Z}^{2}(x)+\frac{B(K,\tilde{Z})D(K_{2})-B(K_{1},\tilde{Z})D(K_{1})}{D(K)D(K_{2})-D(K_{1})^2},\\
&\hat{Z}^{2}_{q,\text{LL}}(x)=\frac{M(K,\tilde{Z})D(K_{2})-M(K_{1},\tilde{Z})D(K_{1})}{D(K)D(K_{2})-D(K_{1})^2}, \\
&\hat{Z}^{2}_{r,\text{LL}}(x)=\frac{R(K,\tilde{Z})D(K_{2})-R(K_{1},\tilde{Z})D(K_{1})}{D(K)D(K_{2})-D(K_{1})^2}.
\end{flalign*}

\begin{theorem}
\label{thm-z-LL}
Let  Assumption $\ref{assum-kernel function}$ - $\ref{assum-local time}$, $\ref{assumz}$ hold. We have
\begin{flalign}
\hat{Z}^{2}_{p,\text{LL}}(x)=\tilde{Z}^{2}(x)+o_{p}(h)+O_{p}(h^{1/2}\ell(T,x)^{-1/2})
\end{flalign}
uniformly in $T$ as $\Delta \rightarrow 0$ and $h \rightarrow 0$,  and,
\begin{flalign*}
\left(\frac{h\ell(T,x)}{\Delta}\right)^{1/2}\hat{Z}_{\text{LL}}^{2}(x)\Rightarrow \sqrt{2}l(K_{2})^{1/2}\tilde{Z}^{2}(x)\mathbb{N}.
\end{flalign*}
Moreover,
$\hat{Z}^{2}_{r,\text{LL}}(x)=o_{p}(h^{2})$.
\end{theorem}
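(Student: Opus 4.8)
The plan is to work from the decomposition $\hat{\tilde{Z}}^{2}_{\text{LL}}(x)=\hat{Z}^{2}_{p,\text{LL}}(x)+\hat{Z}^{2}_{q,\text{LL}}(x)+\hat{Z}^{2}_{r,\text{LL}}(x)$ and to reduce each piece to its Nadaraya--Watson counterpart from Theorem \ref{thm-z-NW} by exploiting the structure of the local-linear weights, exactly as was done for $\tilde{f}$ in Theorem \ref{thm-g-LL}. The first step is to record the occupation-time/local-time behaviour of the kernel sums. Under Assumptions \ref{assum-recurrence} and \ref{assum-local time} together with the kernel conditions of Assumption \ref{assum-kernel function}, one shows that $(\Delta/h)D(K)$ and $(\Delta/h)D(K_{2})$ converge in probability, uniformly in $T$, to $\ell(T,x)$-proportional quantities with $D(K_{2})/D(K)\to l(K_{1})$, while $D(K_{1})/D(K)=O_{p}(h)$ because $K_{1}(u)=uK(u)$ is odd, so its leading term only sees the spatial derivative of the speed density. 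Consequently the common denominator satisfies $D(K)D(K_{2})-D(K_{1})^{2}=D(K)D(K_{2})\,(1+o_{p}(1))$, and every $K_{1}$-weighted numerator will turn out to be of strictly smaller order than its $K$- and $K_{2}$-weighted analogue.

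For $\hat{Z}^{2}_{p,\text{LL}}(x)$ I would substitute the second-order expansion $\tilde{Z}^{2}(X_{i\Delta})-\tilde{Z}^{2}(x)=(\tilde{Z}^{2})'(x)(X_{i\Delta}-x)+\tfrac12(\tilde{Z}^{2})''(x)(X_{i\Delta}-x)^{2}+o((X_{i\Delta}-x)^{2})$ into $B(K,\tilde{Z})$ and $B(K_{1},\tilde{Z})$, which turns both into linear combinations of $D(K_{1})$, $D(K_{2})$ and $D(K_{3})$ (with $K_{3}(u)=u^{3}K(u)$, odd), up to remainders controlled by the modulus of continuity of $(\tilde{Z}^{2})''$ and the uniform local-time bound in Assumption \ref{assum-local time}(i). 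The terms proportional to $(\tilde{Z}^{2})'(x)$ --- precisely the source of the $h^{2}l(K_{1})(\tilde{Z}^{2})'(x)\,s'(x)/s(x)$ bias of the Nadaraya--Watson estimator in Theorem \ref{thm-z-NW} --- cancel in the numerator $B(K,\tilde{Z})D(K_{2})-B(K_{1},\tilde{Z})D(K_{1})$ by the defining property of the local-linear weights, leaving a deterministic term of order $\Delta h=o_{p}(h)$ together with the stochastic remainder of the Taylor expansion, which after division by $D(K)$ is $O_{p}(h^{1/2}\ell(T,x)^{-1/2})$ once $\Delta\sum_{i}$ is replaced by $\int_{0}^{T}$ (the discretization error being controlled by Assumption \ref{assumz}(i)) and the local-time central limit theorem is applied. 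This yields the stated expansion, uniformly in $T$.

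For the variance and remainder pieces I would use $D(K_{1})/D(K_{2})=o_{p}(1)$ together with the fact that $M(K_{1},\tilde{Z})$ and $R(K_{1},\tilde{Z})$ are of the same stochastic order as $M(K,\tilde{Z})$ and $R(K,\tilde{Z})$, so that $\hat{Z}^{2}_{q,\text{LL}}(x)=\hat{Z}^{2}_{q,\text{NW}}(x)\,(1+o_{p}(1))$ and $\hat{Z}^{2}_{r,\text{LL}}(x)=\hat{Z}^{2}_{r,\text{NW}}(x)\,(1+o_{p}(1))$ up to cross terms that are even smaller. Then $\hat{Z}^{2}_{r,\text{LL}}(x)=o_{p}(h^{2})$ follows from Theorem \ref{thm-z-NW}, and since an $o_{p}(1)$ factor does not affect the weak limit, $(h\ell(T,x)/\Delta)^{1/2}\hat{Z}^{2}_{q,\text{LL}}(x)\Rightarrow\sqrt{2}\,l(K_{2})^{1/2}\tilde{Z}^{2}(x)\mathbb{N}$ transfers from there as well. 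For completeness I would re-derive the last limit by writing $M(K,\tilde{Z})=\tfrac1h\sum_{i}K\!\left(\tfrac{X_{i\Delta}-x}{h}\right)\int_{i\Delta}^{(i+1)\Delta}2(Y_{t}-Y_{i\Delta})Z_{t}\,dW_{t}$ as a sum of martingale differences, computing its conditional variance via the It\^o isometry and $\mathbb{E}[(Y_{t}-Y_{i\Delta})^{2}\mid\mathcal{F}_{i\Delta}]=\tilde{Z}^{2}(X_{i\Delta})(t-i\Delta)+o(\Delta)$ (which produces the factor $2\tilde{Z}^{4}(x)$ after local-time normalization), and checking the conditional Lindeberg condition with the help of Assumption \ref{assumz}.

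The main obstacle is not the local-linear algebra, which is routine once the kernel-sum expansions of the first step are in hand, but the uniform-in-$T$, in-probability control of the $K_{1}$-weighted terms: $D(K_{1})$ is a signed Riemann sum whose mean is $O(h)$ relative to $D(K)$ but whose fluctuations must be bounded separately, by a second-moment/maximal-inequality argument resting on the occupation-time formula and Assumption \ref{assum-local time}(i); and the discretization-rate exponents $\tfrac18,\tfrac14,\tfrac12$ in Assumption \ref{assumz} are calibrated precisely so that the three error sources --- replacing the continuous-time integrals by their discretizations, Taylor-expanding $\tilde{Z}^{2}$ and $Y$ on each subinterval, and the martingale-CLT remainder --- are all $o_{p}$ of the target rates. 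A second subtlety, inherited from Theorem \ref{thm-z-NW}, is that the normalization involves the random and possibly divergent local time $\ell(T,x)$, so the limit must be read as a stable (mixed-Gaussian) convergence, with $\ell(T,x)$ treated as asymptotically constant on the spatial scale $h$ via Assumption \ref{assum-local time}.
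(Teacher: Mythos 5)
Your proposal follows essentially the same route as the paper: the paper omits an explicit proof of this theorem, but its proofs of Theorems \ref{thm-g-LL} and \ref{thm-z-NW} combine in exactly the way you describe --- the denominator $D(K)D(K_{2})-D(K_{1})^{2}=l(K_{1})\ell(T,x)^{2}(1+o_{p}(1))$ with $D(K_{1})=O_{p}(h\ell(T,x))$, cancellation of the first-derivative bias in $B(K,\tilde{Z})D(K_{2})-B(K_{1},\tilde{Z})D(K_{1})$, negligibility of the $K_{1}$-weighted terms in the $q$- and $r$-parts, and transfer of the martingale CLT for $M(K,Z)$ from the Nadaraya--Watson analysis. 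The only cosmetic difference is that you derive the bias cancellation from a second-order Taylor expansion of $\tilde{Z}^{2}$ (which would require $(\tilde{Z}^{2})''$), whereas the paper obtains it directly from its occupation-time lemmas (Lemmas \ref{lemmakww} and \ref{lemmaukww}), which need only continuous differentiability.
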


\begin{remark}
To ensure the consistency of $\hat{\tilde{Z}}^{2}_{\text{NW}}(x)$ and $\hat{\tilde{Z}}^{2}_{\text{LL}}(x)$, we only need $\Delta \to 0$ and $h \to 0$, we do not need $\ell(T,x)\stackrel{p}{\longrightarrow} \infty$ or $T\to \infty$. i.e., $\hat{\tilde{Z}}^{2}_{\text{NW}}(x)$ and $\hat{\tilde{Z}}^{2}_{\text{LL}}(x)$ could be consistent as long as $\Delta$ tends to $0$ sufficiently fastly relative to $h$, and the process $X_{t}$ need not be recurrent. So these estimators can be applied to both the infinite horizon BSDE $(\ref{model-FBSDE-infinite})$ and the BSDE $(\ref{model-FBSDE with stopping time})$.
\end{remark}

\section{Empirical likelihood point-wise confidence intervals}
This section investigates the empirical likelihood method in constructing the data-driven point-wise confidence intervals of $\tilde{f}(x)$ and $\tilde{Z}^{2}(x)$ for the two types of FBSDEs.

\subsection{Confidence interval for $\tilde{f}(x)$}
Denote
\begin{flalign*}
g_{f_i}(x,h,\theta)=K\left(\frac{X_{i}-x}{h}\right)
\left(\frac{Y_{(i+1)\Delta}-Y_{i\Delta}}{\Delta}-\theta\right),
\end{flalign*}
where $\theta$ is the candidate value of the target quantity $\tilde{f}(x)$.
Define
\begin{flalign}
\label{optimizationproblem}
L_{f}(x,h,\theta) =\max _{\left(p_{1},\cdots,p_{n}\right)}\left\{\prod_{i=1}^{n} np_{i} \mid \sum_{i=1}^{n} p_{i} g_{f_i}(x,h,\theta)=0, p_{i} \geq 0, \sum_{i=1}^{n} p_{i}=1\right\} ,
\end{flalign}
and
\begin{flalign}
l_{f}(x,h,\theta)=-2\log L_{f}(x,h,\theta).
\end{flalign}
The following theorem describes the asymptotic properties of $l_{f}(x,h,\theta)$ which helps to construct the confidence interval of $\tilde{f}(x)$.
\begin{theorem}
\label{thmempiricalf}
Let Assumption $\ref{assum-kernel function}$ - $\ref{assumf}$ hold. Furthermore, 
$$h\ell(T,x)\stackrel {a.s.} {\longrightarrow}\infty,$$
and,
$$h^{3}\ell(T,x)\stackrel {a.s.} {\longrightarrow}0,$$
as $n,T\to \infty$. Then,
\begin{flalign*}
l_{f}(x,h,\tilde{f} (x)) \Rightarrow \chi^{2}(1),
\end{flalign*}
and,
\begin{flalign*}
l_{f}\left(x,h,\tilde{f}(x)+\frac{\tau(x)}{\sqrt{h\ell(T,x)} }\right) \Rightarrow
\chi^{2}\left(1,\frac{\tau^{2}(x)}{l(K_{2})\tilde{Z}^{2}(x)} \right),
\end{flalign*}
as $n,T\to \infty$, where $\tau(x)$ is fixed and $\chi^{2}(p_1, p_2)$ is the chi-squared distribution with degree of freedom $p_1$ and non-central parameter $p_2$.
Then the $100(1-\alpha)\%$ EL confidence interval for $\tilde{f}(x)$ is defined as
\begin{flalign*}
I_{f,\alpha}^{EL}=\{l_{f}(x,h,\theta)\leq \chi_{1-\alpha}^{2}(1) \},
\end{flalign*}
where $\chi_{1-\alpha}^{2}(1)$ is the inverse cumulative distribution function for the $\chi^{2}(1)$
distribution evaluated at $1-\alpha$.
\end{theorem}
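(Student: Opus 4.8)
The plan is to run the standard Owen-type argument for empirical likelihood, with the increments of the forward-backward system in the role of estimating functions and with the limit theory of Theorem~\ref{thm-g-NW} supplying the central limit behaviour. Write $g_i(\theta):=g_{f_i}(x,h,\theta)$, $S_n(\theta):=\sum_{i=1}^{n}g_i(\theta)$ and $V_n(\theta):=\sum_{i=1}^{n}g_i(\theta)^2$. By Lagrangian duality the maximizer in $(\ref{optimizationproblem})$ is $p_i=n^{-1}\{1+\lambda g_i(\theta)\}^{-1}$, where $\lambda=\lambda(x,h,\theta)$ is the root of $\sum_{i=1}^{n}g_i(\theta)/\{1+\lambda g_i(\theta)\}=0$, so that $l_f(x,h,\theta)=2\sum_{i=1}^{n}\log\{1+\lambda g_i(\theta)\}$. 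Once I establish $\max_{1\le i\le n}|g_i(\theta)|=o_p(V_n(\theta)^{1/2})$, which follows from $\Delta^{1/8}(\log n)^{1/2}=o_p(1)$ (Assumption~\ref{assumf}) together with $h\ell(T,x)\to\infty$, because the largest of the $n$ Gaussian-type increments $\Delta^{-1}\int_{i\Delta}^{(i+1)\Delta}Z_s\,dW_s$ is $O_p(\sqrt{(\log n)/\Delta})$ while $V_n(\theta)^{1/2}\asymp\sqrt{h\ell(T,x)}/\Delta$, the defining equation gives $\lambda=S_n(\theta)/V_n(\theta)+o_p(S_n(\theta)/V_n(\theta))$ and hence the Wilks expansion $l_f(x,h,\theta)=S_n(\theta)^2/V_n(\theta)+o_p(1)$. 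It then remains to obtain a central limit theorem for $S_n$ and a law of large numbers for $V_n$, at $\theta=\tilde{f}(x)$ and at the local alternative $\theta_\tau:=\tilde{f}(x)+\tau(x)/\sqrt{h\ell(T,x)}$.

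For the numerator at $\theta=\tilde{f}(x)$ I would insert the representation $Y_{(i+1)\Delta}-Y_{i\Delta}=\int_{i\Delta}^{(i+1)\Delta}\tilde{f}(X_s)\,ds+\int_{i\Delta}^{(i+1)\Delta}Z_s\,dW_s$, which identifies $S_n(\tilde{f}(x))=D(K)\{\hat{\tilde{f}}_{\text{NW}}(x)-\tilde{f}(x)\}$. By Theorem~\ref{thm-g-NW}, multiplied through by $D(K)$ and using $D(K)\asymp h\ell(T,x)/\Delta$, this is the sum of a deterministic bias of order $O_p(h^3\ell(T,x)/\Delta)$, a remainder of the smaller magnitude $o_p(h^3\ell(T,x)/\Delta)$ coming from the $R(K,\tilde{f})$ term, and the martingale piece $D(K)\hat{g}_{q,\text{NW}}(x)$. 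Normalizing and using $(h\ell(T,x))^{1/2}\hat{g}_{q,\text{NW}}(x)\Rightarrow l(K_2)^{1/2}\tilde{Z}(x)\mathbb{N}$ from Theorem~\ref{thm-g-NW}, I obtain $\Delta(h\ell(T,x))^{-1/2}S_n(\tilde{f}(x))\Rightarrow l(K_2)^{1/2}\tilde{Z}(x)\mathbb{N}$, the bias and remainder vanishing after this normalization because $h^3\ell(T,x)\to0$ and $h\to0$ force $h^5\ell(T,x)\to0$. For the denominator I use the occupation-time approximation $\Delta\sum_{i}\varphi(X_{i\Delta})K^2((X_{i\Delta}-x)/h)\approx h\ell(T,x)\,\varphi(x)\,l(K_2)$ underlying the proof of Theorem~\ref{thm-g-NW}, applied to the leading part of $g_i(\tilde{f}(x))^2$, whose conditional mean is $\Delta^{-1}K^2((X_{i\Delta}-x)/h)\tilde{Z}^2(X_{i\Delta})$; this gives $\Delta^2(h\ell(T,x))^{-1}V_n(\tilde{f}(x))\stackrel{p}{\longrightarrow}l(K_2)\tilde{Z}^2(x)$, the cross terms and the fluctuation of the realized quadratic variation being of lower order under Assumption~\ref{assumf}. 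Dividing yields $l_f(x,h,\tilde{f}(x))\Rightarrow\chi^2(1)$.

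For the non-central claim I use $g_i(\theta_\tau)=g_i(\tilde{f}(x))-\tau(x)(h\ell(T,x))^{-1/2}K((X_{i\Delta}-x)/h)$, so $S_n(\theta_\tau)=S_n(\tilde{f}(x))-\tau(x)(h\ell(T,x))^{-1/2}D(K)$, while $\Delta^2(h\ell(T,x))^{-1}V_n(\theta_\tau)$ keeps the limit $l(K_2)\tilde{Z}^2(x)$ (the perturbation being lower order). Because $D(K)=\sum_i K((X_{i\Delta}-x)/h)\asymp h\ell(T,x)/\Delta$, the deterministic displacement of $\Delta(h\ell(T,x))^{-1/2}S_n$ is $-\tau(x)+o_p(1)$, hence $\Delta(h\ell(T,x))^{-1/2}S_n(\theta_\tau)\Rightarrow l(K_2)^{1/2}\tilde{Z}(x)\mathbb{N}-\tau(x)$, and the Wilks expansion gives $l_f(x,h,\theta_\tau)\Rightarrow\{l(K_2)^{1/2}\tilde{Z}(x)\mathbb{N}-\tau(x)\}^2/\{l(K_2)\tilde{Z}^2(x)\}$, which is distributed as $\chi^2(1,\tau^2(x)/(l(K_2)\tilde{Z}^2(x)))$. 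The confidence-interval statement is then immediate from the first convergence: $P(\tilde{f}(x)\in I_{f,\alpha}^{EL})=P(l_f(x,h,\tilde{f}(x))\le\chi_{1-\alpha}^2(1))\to1-\alpha$.

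The main obstacle I anticipate is the martingale central limit theorem for $S_n(\tilde{f}(x))$ under the random, diverging normalization $h\ell(T,x)$: one must verify a conditional Lindeberg condition and the convergence of the conditional variance for a triangular array of martingale differences built from It\^o integrals of the possibly non-stationary process $X_t$, and simultaneously control the discretization errors, namely the $R(K,\tilde{f})$-type remainder, the higher-order terms in the It\^o expansion of the increments, and the discrepancy between the local time of the sampled chain and $\ell(T,x)$. This is precisely where Assumptions~\ref{assum-local time} and~\ref{assumf} and the extra rates $h\ell(T,x)\to\infty$, $h^3\ell(T,x)\to0$ enter. A subsidiary technical point is the uniform smallness $\max_i|\lambda g_i(\theta)|\stackrel{p}{\longrightarrow}0$ needed to legitimize the logarithmic expansion, which again reduces to the maximal bound on the It\^o increments above.
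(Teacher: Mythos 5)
Your proposal is correct and follows essentially the same route as the paper's (deliberately sketchy) proof: the Owen--Wilks expansion $l_{f}(x,h,\theta)=\bigl(\sum_{i}g_{f_i}\bigr)^{2}/\sum_{i}g_{f_i}^{2}+o_{p}(1)$, the identification of the numerator with $D(K)\{\hat{\tilde{f}}_{\text{NW}}(x)-\tilde{f}(x)\}$ so that Theorem~\ref{thm-g-NW} together with $h^{3}\ell(T,x)\to 0$ and $h\ell(T,x)\to\infty$ delivers $\sqrt{h\ell(T,x)}\,B\Rightarrow l(K_{2})^{1/2}\tilde{Z}(x)\mathbb{N}$, the occupation-density limit $h\ell(T,x)A\stackrel{p}{\longrightarrow}l(K_{2})\tilde{Z}^{2}(x)$ for the denominator, and the deterministic shift $-\tau(x)$ under the local alternative. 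If anything, you supply more of the supporting detail (the maximal-increment bound legitimizing the logarithmic expansion and the Lindeberg verification) than the paper itself records.
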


\begin{remark}
(i) Here we focus on local constant smoothing, although the extensions to the local linear (polynomial) are entirely straightforward.    \\
(ii) $l_{f}(x,h,\theta)$ can be calculated easily.
Using Lagrange multipliers, the constrained optimization problem $(\ref{optimizationproblem})$ is solved by
\begin{equation}
\label{lambda}
\widehat{p}_i=\frac{1}{n\left(1+\lambda_{f}(x, h, \theta) g_{f_i}(x, h, \theta)\right)},
\end{equation}
where $\lambda_{f}(x, h, \theta)$ satisfies
\begin{equation}
\label{solveprolambdasum}
\sum_{i=1}^n \frac{g_{f_i}(x, h, \theta)}{1+\lambda_{f}(x, h, \theta) g_{f_i}(x, h, \theta)}=0 .
\end{equation}
Then, given $\theta$, we can solve $(\ref{solveprolambdasum})$ numerically, and then compute
$l_{f}(x, h, \theta)=2\sum_{i=1}^{n-1}\log\left(1+\lambda_{f} (x, h, \theta) g_{f_i}(x, h, \theta)\right)$.
\end{remark}

\subsection{Confidence interval for $\tilde{Z}^{2}(x)$}
The point-wise EL confidence interval for $\tilde{Z}^{2}(x)$ can be constructed in a similar way.
Denote
\begin{flalign*}
g_{Z_i}(x,h,\theta)=K\left(\frac{X_{i}-x}{h}\right)
\left(\frac{(Y_{(i+1)\Delta}-Y_{i\Delta})^{2}}{\Delta}-\theta\right),
\end{flalign*}
and,
\begin{flalign}
L_{Z}(x,h,\theta) =\max _{\left(p_{1},\cdots,p_{n}\right)}\left\{\prod_{i=1}^{n} np_{i} \mid \sum_{i=1}^{n} p_{i} g_{Z_i}(x,h,\theta)=0, p_{i} \geq 0, \sum_{i=1}^{n} p_{i}=1\right\} ,
\end{flalign}
\begin{flalign}
l_{Z}(x,h,\theta)=-2\log L_{Z}(x,h,\theta).
\end{flalign}
The following theorem describes the asymptotic properties of $l_{Z}(x,h,\theta)$ which helps to construct the confidence interval of $\tilde{Z}^{2}(x)$.
\begin{theorem}
\label{thmempiricalZ}
Let Assumption $\ref{assum-kernel function}$ - $\ref{assumz}$ hold. Furthermore, 
$$h\ell(T,x)\stackrel {a.s.} {\longrightarrow}\infty,$$
and,
$$\frac{h^{3}\ell(T,x)}{\Delta}\stackrel {a.s.} {\longrightarrow}0,$$
as $n,T\to \infty$. Then,
\begin{flalign*}
l_{Z}(x,h,\tilde{Z}^{2}(x)) \Rightarrow \chi^{2}(1),
\end{flalign*}
and,
\begin{flalign*}
l_{Z}\left(x,h,\tilde{Z}^{2}(x)+\frac{\tau(x)}{\sqrt{h\ell(T,x)} }\right) \Rightarrow
\chi^{2}\left(1,\frac{\tau^{2}(x)}{l(K_{2})\tilde{Z}^{4}(x)} \right),
\end{flalign*}
as $n,T\to \infty$.
Then the $100(1-\alpha)\%$ EL confidence interval for $\tilde{Z}^{2}(x)$ is defined as
\begin{flalign*}
I_{Z,\alpha}^{EL}=\{l_{Z}(x,h,\theta)\leq \chi_{1-\alpha}^{2}(1) \}.
\end{flalign*}
\end{theorem}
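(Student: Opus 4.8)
This is the $\tilde{Z}^{2}$-analogue of Theorem \ref{thmempiricalf}, and I would prove it by the same three-stage argument: linearise the empirical likelihood ratio, identify its numerator through the estimator asymptotics of Theorem \ref{thm-z-NW}, and identify its denominator by an It\^o expansion of the squared increments. For the linearisation, as in the remark after Theorem \ref{thmempiricalf}, Lagrange multipliers give $l_{Z}(x,h,\theta)=2\sum_{i}\log\!\bigl(1+\lambda_{Z}g_{Z_i}\bigr)$ with $\lambda_{Z}=\lambda_{Z}(x,h,\theta)$ solving $\sum_{i}g_{Z_i}/(1+\lambda_{Z}g_{Z_i})=0$. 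I would first show $\lambda_{Z}=O_{p}\bigl((h\ell(T,x)/\Delta)^{-1/2}\bigr)$ and $\lambda_{Z}\max_{i}|g_{Z_i}|=o_{p}(1)$, the latter because each $g_{Z_i}$ is a bounded kernel weight times a centred near-Gaussian square, so $\max_{i}|g_{Z_i}|=O_{p}(\log n)$, combined with $\Delta^{1/8}(\log n)^{1/2}=o_{p}(1)$ from Assumption \ref{assumz}(i). A second-order expansion of $\log(1+\cdot)$ then produces the standard representation $l_{Z}(x,h,\theta)=\bigl(\sum_{i}g_{Z_i}\bigr)^{2}\big/\sum_{i}g_{Z_i}^{2}+o_{p}(1)$.

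For the numerator I would use $\sum_{i}g_{Z_i}(x,h,\theta)=N(K,Z)-\theta D(K)=D(K)\bigl(\hat{\tilde{Z}}^{2}_{\text{NW}}(x)-\theta\bigr)$, which ties it directly to Theorem \ref{thm-z-NW}. At $\theta=\tilde{Z}^{2}(x)$, the bias term $h^{2}l(K_{1})(\tilde{Z}^{2})'(x)s'(x)/s(x)$ and the remainder $\hat{Z}^{2}_{r,\text{NW}}(x)=o_{p}(h^{2})$ vanish after rescaling because $h^{3}\ell(T,x)/\Delta\to0$ forces $(h\ell(T,x)/\Delta)^{1/2}h^{2}\to0$, while the stochastic part is handled by the central limit theorem $\bigl(h\ell(T,x)/\Delta\bigr)^{1/2}\hat{Z}^{2}_{q,\text{NW}}(x)\Rightarrow\sqrt{2}\,l(K_{2})^{1/2}\tilde{Z}^{2}(x)\mathbb{N}$. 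At $\theta=\tilde{Z}^{2}(x)+\tau(x)/\sqrt{h\ell(T,x)}$ the same applies, the extra deterministic term $-D(K)\tau(x)/\sqrt{h\ell(T,x)}$ converging, after rescaling, to a constant that displaces the limiting Gaussian and, once divided by the limiting denominator, yields the non-centrality $\tau^{2}(x)/\bigl(l(K_{2})\tilde{Z}^{4}(x)\bigr)$.

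For the denominator, It\^o's formula applied to $(Y_{(i+1)\Delta}-Y_{i\Delta})^{2}$ with $\mathrm{d}Y_{t}=\tilde{f}(X_{t})\,\mathrm{d}t+Z_{t}\,\mathrm{d}W_{t}$ gives $(Y_{(i+1)\Delta}-Y_{i\Delta})^{2}=\int_{i\Delta}^{(i+1)\Delta}\tilde{Z}^{2}(X_{t})\,\mathrm{d}t+2\int_{i\Delta}^{(i+1)\Delta}(Y_{t}-Y_{i\Delta})\tilde{f}(X_{t})\,\mathrm{d}t+2\int_{i\Delta}^{(i+1)\Delta}(Y_{t}-Y_{i\Delta})Z_{t}\,\mathrm{d}W_{t}$; dividing by $\Delta$ and subtracting $\tilde{Z}^{2}(x)$, the martingale part dominates, with conditional variance $\approx2\tilde{Z}^{4}(X_{i\Delta})$, so $\sum_{i}g_{Z_i}^{2}=2\tilde{Z}^{4}(x)\sum_{i}K^{2}\!\bigl((X_{i\Delta}-x)/h\bigr)(1+o_{p}(1))$, the drift and discretisation corrections being absorbed via Assumption \ref{assumz}(ii)--(iii). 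An occupation-time/local-time argument as in the proof of Theorem \ref{thm-z-NW} gives $(\Delta/h)\sum_{i}K^{2}\bigl((X_{i\Delta}-x)/h\bigr)\xrightarrow{p}l(K_{2})\ell(T,x)$ and $(\Delta/h)D(K)\xrightarrow{p}\ell(T,x)$. Assembling the pieces, $l_{Z}(x,h,\tilde{Z}^{2}(x))\Rightarrow\mathbb{N}^{2}=\chi^{2}(1)$ and $l_{Z}(x,h,\tilde{Z}^{2}(x)+\tau(x)/\sqrt{h\ell(T,x)})\Rightarrow(\mathbb{N}-c)^{2}\sim\chi^{2}(1,c^{2})$ with $c^{2}$ the stated non-centrality; the coverage claim $P\{l_{Z}(x,h,\tilde{Z}^{2}(x))\le\chi^{2}_{1-\alpha}(1)\}\to1-\alpha$ is then immediate by continuous mapping.

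The main obstacle is the joint handling of the discretisation error and the central limit theorem. The $g_{Z_i}$ are not i.i.d.: the martingale components of the It\^o expansion form a triangular martingale-difference array with respect to $\{\mathcal{F}_{i\Delta}\}$ whose natural norming is the \emph{random} local time $\ell(T,x)$, so one must invoke a stable (martingale) central limit theorem with random norming in the spirit of \cite{ait2016}, and simultaneously show that replacing $(Y_{(i+1)\Delta}-Y_{i\Delta})^{2}/\Delta$ by its leading term $\tilde{Z}^{2}(X_{i\Delta})(W_{(i+1)\Delta}-W_{i\Delta})^{2}/\Delta$, and controlling the remainders involving $Y_{t}-Y_{i\Delta}$ and $\tilde{Z}^{2}(X_{t})-\tilde{Z}^{2}(X_{i\Delta})$, costs only $o_{p}$ of the order $\sqrt{h\ell(T,x)/\Delta}$. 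This is exactly where the rates $\Delta^{1/8}T_{H_{1}}=o_{p}(1)$, $\Delta^{1/4}T_{H_{2}}=o_{p}(1)$ and $\Delta^{1/2}T_{b}=o_{p}(1)$ of Assumption \ref{assumz} are used, each one bounding a specific remainder by Burkholder--Davis--Gundy together with the growth and moment bounds on $\sigma,\tilde{Z},\tilde{f},b$ and their derivatives, while the two added hypotheses $h\ell(T,x)\to\infty$ and $h^{3}\ell(T,x)/\Delta\to0$ ensure, respectively, divergence of the effective sample size and asymptotic negligibility of the smoothing bias.
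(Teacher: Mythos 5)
Your plan follows the same route the paper uses for the $\tilde f$-analogue (Theorem \ref{thmempiricalf}), which is the only template the paper supplies --- it omits a proof of Theorem \ref{thmempiricalZ} entirely. You linearise $l_Z$ into the self-normalised form $\bigl(\sum_i g_{Z_i}\bigr)^2/\sum_i g_{Z_i}^2 + o_p(1)$, feed the numerator through Theorem \ref{thm-z-NW} via $\sum_i g_{Z_i} = D(K)\bigl(\hat{\tilde Z}^2_{\text{NW}}(x)-\theta\bigr)$, and identify the denominator as $2\tilde Z^4(x)\sum_i K^2(\cdot)(1+o_p(1))$ by an It\^o expansion of the squared increments; this is exactly the paper's (sketched) argument, and you supply more of the detail it leaves out (the bound on $\lambda_Z\max_i|g_{Z_i}|$, the stable martingale CLT with random norming, the explicit second-moment computation for the denominator). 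For the central statement $l_Z(x,h,\tilde Z^2(x))\Rightarrow\chi^2(1)$ your accounting is consistent: numerator $\asymp \sqrt{h\ell(T,x)/\Delta}$, denominator $\asymp 2\tilde Z^4(x)\,l(K_2)\,(h/\Delta)\ell(T,x)$, and the ratio is $\mathbb{N}^2$.

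The gap is in the non-central part, where you assert that ``the same applies.'' Carry your own rates through: the displacement contributes $-D(K)\,\tau(x)/\sqrt{h\ell(T,x)} \asymp -\tau(x)\sqrt{h\ell(T,x)}/\Delta$ to the numerator, while the stochastic part is $O_p\bigl(\sqrt{h\ell(T,x)/\Delta}\bigr)$; their ratio is $\tau(x)/\sqrt{\Delta}\to\infty$, so the self-normalised statistic diverges rather than converging to $\chi^2\bigl(1,\tau^2(x)/(l(K_2)\tilde Z^4(x))\bigr)$. Unlike the $\tilde f$ case, where the estimator's rate is $\sqrt{h\ell(T,x)}$ and the displacement $\tau(x)/\sqrt{h\ell(T,x)}$ is exactly one standard deviation, here the rate is $\sqrt{h\ell(T,x)/\Delta}$, so the local alternative must be taken at scale $\tau(x)\sqrt{\Delta/(h\ell(T,x))}$, and the resulting non-centrality is then $\tau^2(x)/(2\,l(K_2)\tilde Z^4(x))$ --- note also the factor $2$ coming from the variance $2\,l(K_2)\tilde Z^4(x)$ of the squared-increment martingale, which is absent from the stated non-centrality. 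This mismatch originates in the theorem statement itself, but your proof cannot close as written: you must either rescale the alternative or explain why the displacement term remains $O(1)$ after normalisation, and neither is possible with the displacement as given.
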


\section{Proofs}
To give the proofs the main theorems, we introduce some useful lemmas at first.
\begin{lemma}(Lemma $6$ and Lemma $10$ in \cite{ait2016})
\label{lemma-discrete-continuous-K}
\begin{flalign}
\frac{\Delta}{h}\sum\limits_{i=0}^{n-1}K\left(\frac{X_{i\Delta}-x}{h}\right)
&= \frac{1}{h} \int_{0}^{T} K\left(\frac{X_{t}-x}{h}\right) dt+O_{p}\left(h^{-2}\Delta\ell(T,x)\right)\nonumber\\
&=\ell(T,x)+o_{p}(\ell(T,x))+O_{p}\left(h^{-2}\Delta\ell(T,x)\right)\nonumber
\end{flalign}
uniformly in $T$ as $h \rightarrow 0$ and $\Delta \rightarrow 0$.
\end{lemma}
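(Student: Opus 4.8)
The plan is to establish the identity in two stages, each governed by a different mechanism. First I would pass from the Riemann sum to the time integral, controlling the error by It\^o's formula applied blockwise; second I would pass from the time integral to the local time, controlling the error by the occupation-time formula together with the spatial regularity of $\ell(T,\cdot)$.

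For the first stage, since $t\mapsto K((X_{i\Delta}-x)/h)$ is constant on each block $[i\Delta,(i+1)\Delta]$, the difference between $\frac{\Delta}{h}\sum_{i=0}^{n-1}K((X_{i\Delta}-x)/h)$ and $\frac1h\int_0^T K((X_t-x)/h)\,dt$ is $-\frac1h\sum_i\int_{i\Delta}^{(i+1)\Delta}[K((X_t-x)/h)-K((X_{i\Delta}-x)/h)]\,dt$. Applying It\^o's formula to $s\mapsto K((X_s-x)/h)$ on $[i\Delta,t]$, which is legitimate because $K\in C^2$ and $b,\sigma$ are smooth, produces a bounded-variation part with factors $h^{-1}K'$ and $h^{-2}K''$ and a stochastic part with factor $h^{-1}K'$. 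Since $K'$ and $K''$ are bounded with compact support, the bounded-variation contribution is dominated in absolute value by $\frac{\Delta}{h}\int_0^T[h^{-1}|b(X_s)|\,|K'|((X_s-x)/h)+h^{-2}\sigma^2(X_s)|K''|((X_s-x)/h)]\,ds$; by the occupation-time formula, the local boundedness of $b,\sigma$ near $x$, and $\int|K'|,\int|K''|<\infty$, each such integral is $O_p(\ell(T,x))$, so this contribution is $O_p(h^{-2}\Delta\ell(T,x))$, which is exactly the asserted error. For the stochastic contribution I would use a stochastic Fubini to collapse $\sum_i\int_{i\Delta}^{(i+1)\Delta}\int_{i\Delta}^t(\cdots)\,dW_s\,dt$ into a single It\^o integral whose integrand is $O(\Delta h^{-1})$ times $|\sigma(X_s)K'((X_s-x)/h)|$; its quadratic variation is then $O_p(\Delta^2 h^{-3}\ell(T,x))$ by another appeal to the occupation-time formula (for $\int_0^T|K'|^2((X_s-x)/h)\sigma^2(X_s)\,ds$), so by Doob/BDG the stochastic remainder is $O_p(\Delta h^{-3/2}\ell(T,x)^{1/2})=h^{1/2}\ell(T,x)^{-1/2}\cdot O_p(h^{-2}\Delta\ell(T,x))$, of smaller order under the standing rate conditions (in particular $h\ell(T,x)\to\infty$). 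Collecting these gives the first displayed equality.

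For the second stage, the occupation-time formula gives $\frac1h\int_0^T K((X_t-x)/h)\,dt=\int_{\mathbb R}K(u)\ell(T,x+hu)\,du$; writing this as $\ell(T,x)\int K(u)\,du+\int K(u)[\ell(T,x+hu)-\ell(T,x)]\,du$ and using $\int K=1$, it remains to show the correction term is $o_p(\ell(T,x))$. This I would obtain from the modulus of continuity of the diffusion local time in the spatial variable together with Assumption \ref{assum-local time}(i), which bounds $\sup_{|y-x|\le\varepsilon}\ell(T,y)$ by $O_p(\ell(T,x)^2)$ and lets the vanishing factor coming from $h\to0$ overcome it. Because every $O_p$ and $o_p$ bound above is expressed through $\ell(T,x)$ itself, the conclusion holds uniformly in $T$.

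I expect the main obstacle to be the stochastic remainder in the first stage: justifying the stochastic Fubini, obtaining the sharp $\Delta^2 h^{-3}\ell(T,x)$ bound on its quadratic variation, and verifying that the resulting rate is genuinely dominated by $h^{-2}\Delta\ell(T,x)$ rather than merely comparable to it. A secondary subtlety is the spatial-continuity argument for $\ell(T,\cdot)$ in the second stage, where Assumption \ref{assum-local time}(i) enters in an essential way.
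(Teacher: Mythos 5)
Your argument is correct and takes essentially the same route as the source: the paper itself offers no proof of this lemma (it is quoted from Lemmas 6 and 10 of A\"it-Sahalia and Park), but your blockwise It\^o decomposition for the Riemann-sum-to-integral step, followed by the occupation-density formula and the spatial continuity of $\ell(T,\cdot)$ via Assumption \ref{assum-local time}(i), is exactly the mechanism used there and mirrored in the paper's own proof of the analogous Lemma \ref{lemma-K*g-discrete-continuous}. The only point worth flagging is that absorbing the martingale remainder $O_p(\Delta h^{-3/2}\ell(T,x)^{1/2})$ into the stated error does rely on $h\ell(T,x)\to\infty$ (or on folding it into the $o_p(\ell(T,x))$ term), which you correctly identify as a standing rate condition.
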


\begin{lemma}
\label{lemma-K*g-discrete-continuous}
Assume that $w$ is continuously differentiable on $\mathcal{D}$. Then we have
\begin{flalign*}
\frac{\Delta}{h} \sum_{i=0}^{n-1} K\left(\frac{X_{i\Delta}-x}{h}\right) w\left(X_{i \Delta}\right)
&=\frac{1}{h}\int_{0}^{T}  K\left(\frac{X_{s}-x}{h}\right) w\left(X_{s}\right)ds
+O_{p}\left(h^{-2}\Delta T_{w}\ell(T,x)\right)%
+O_{p}\left(\kappa_{2}T_{w'}\ell(T,x)\right)\nonumber\\
&=w(x)\ell(T,x) +o_{p}(\ell(T,x))+O_{p}\left(h^{-2}\Delta T_{w}\ell(T,x)\right)%
+O_{p}\left(\kappa_{2}T_{w'}\ell(T,x)\right)
\end{flalign*}
uniformly in $T$ as $h \rightarrow 0$ and $\Delta \rightarrow 0$, where $\kappa_{2}=\sup\limits_{t,s\in[0,T],|t-s|\leq\Delta}|X_{t}-X_{s}|
$. $T_{\tilde{\theta}}=\max\limits_{0\leq t\leq T}|\tilde{\theta}(X_{t})|$ for a function $\tilde{\theta}$.
\end{lemma}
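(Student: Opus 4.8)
The plan is to reduce the statement to the unweighted estimate in Lemma~\ref{lemma-discrete-continuous-K}, separating the error from discretizing the kernel from the error from discretizing the weight $w$. Writing the integral as $\frac1h\int_0^T K(\tfrac{X_s-x}{h})w(X_s)\,ds=\frac1h\sum_{i=0}^{n-1}\int_{i\Delta}^{(i+1)\Delta}K(\tfrac{X_s-x}{h})w(X_s)\,ds$, I would decompose
\begin{flalign*}
\frac{\Delta}{h}\sum_{i=0}^{n-1}K\!\left(\tfrac{X_{i\Delta}-x}{h}\right)w(X_{i\Delta})-\frac{1}{h}\int_{0}^{T}K\!\left(\tfrac{X_{s}-x}{h}\right)w(X_{s})\,ds=(\mathrm{I})+(\mathrm{II}),
\end{flalign*}
where $(\mathrm{I})=\frac1h\sum_{i=0}^{n-1}w(X_{i\Delta})\big[\Delta K(\tfrac{X_{i\Delta}-x}{h})-\int_{i\Delta}^{(i+1)\Delta}K(\tfrac{X_{s}-x}{h})\,ds\big]$ keeps the weight frozen on each subinterval and $(\mathrm{II})=\frac1h\sum_{i=0}^{n-1}\int_{i\Delta}^{(i+1)\Delta}K(\tfrac{X_{s}-x}{h})\big[w(X_{i\Delta})-w(X_{s})\big]\,ds$ carries the oscillation of $w$.

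For $(\mathrm{II})$ I would apply the mean value theorem to $w$ on each subinterval; combined with the intermediate value theorem along the continuous path this gives $|w(X_{i\Delta})-w(X_s)|\le T_{w'}|X_{i\Delta}-X_s|\le T_{w'}\kappa_2$, hence $|(\mathrm{II})|\le T_{w'}\kappa_2\cdot\frac1h\int_0^T|K(\tfrac{X_s-x}{h})|\,ds=O_p(\kappa_2 T_{w'}\ell(T,x))$, the estimate $\frac1h\int_0^T|K(\tfrac{X_s-x}{h})|\,ds=O_p(\ell(T,x))$ being the occupation-time bound behind Lemma~\ref{lemma-discrete-continuous-K}. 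For $(\mathrm{I})$, the weight being constant on each subinterval, I would bound $|w(X_{i\Delta})|\le T_w$ and factor it out, so that $|(\mathrm{I})|\le T_w\cdot\frac1h\sum_{i=0}^{n-1}\big|\Delta K(\tfrac{X_{i\Delta}-x}{h})-\int_{i\Delta}^{(i+1)\Delta}K(\tfrac{X_s-x}{h})\,ds\big|$; expanding $K(\tfrac{X_s-x}{h})-K(\tfrac{X_{i\Delta}-x}{h})$ by It\^o's formula, the leading contribution carries the factor $h^{-2}$ from $K''$, and summing over the $O_p(h\ell(T,x)/\Delta)$ subintervals meeting the (compact) support of $K((\,\cdot-x)/h)$ gives the bound $O_p(h^{-2}\Delta\ell(T,x))$, exactly as in the proof of the error term of Lemma~\ref{lemma-discrete-continuous-K} in \cite{ait2016}. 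Thus $(\mathrm{I})=O_p(h^{-2}\Delta T_w\ell(T,x))$, all bounds uniform in $T$, which establishes the first displayed equality.

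For the second equality I would split $w(X_s)=w(x)+[w(X_s)-w(x)]$ inside $\frac1h\int_0^T K(\tfrac{X_s-x}{h})w(X_s)\,ds$: the $w(x)$ part equals $w(x)\big(\ell(T,x)+o_p(\ell(T,x))\big)$ by Lemma~\ref{lemma-discrete-continuous-K}, while on the support of $K(\tfrac{X_s-x}{h})$ one has $|X_s-x|\le Ch$, so $|w(X_s)-w(x)|\le Ch\sup_{|a-x|\le Ch}|w'(a)|=o(1)$ by continuity of $w'$ at $x$, making the remaining part $o(1)\cdot O_p(\ell(T,x))=o_p(\ell(T,x))$; chaining with the first equality yields the second. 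The only delicate point is step $(\mathrm{I})$: one must check that the It\^o-expansion estimate underlying Lemma~\ref{lemma-discrete-continuous-K} survives when the summands are taken in absolute value, so that the frozen weights can be pulled out, which comes down to the bookkeeping that only $O_p(h\ell(T,x)/\Delta)$ of the $n$ subintervals actually meet the support of the localized kernel; the remaining steps are routine.
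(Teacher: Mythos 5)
Your decomposition is exactly the one the paper uses: the same splitting into a kernel-discretization term with the weight frozen at $X_{i\Delta}$ and a weight-oscillation term, the same mean-value-theorem bound $\kappa_2 T_{w'}\cdot\frac1h\int_0^T|K(\frac{X_s-x}{h})|\,ds=O_p(\kappa_2 T_{w'}\ell(T,x))$ for the second piece, and the same reduction to the unweighted occupation-time asymptotics for the final equality. So in structure your argument and the paper's coincide.

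The one step that does not work as written is your treatment of $(\mathrm{I})$. You move the absolute value \emph{inside} the sum, bounding $|(\mathrm{I})|\le T_w\sum_i\bigl|\Delta K(\tfrac{X_{i\Delta}-x}{h})-\int_{i\Delta}^{(i+1)\Delta}K(\tfrac{X_s-x}{h})\,ds\bigr|$, and then claim the It\^o expansion makes the $h^{-2}K''$ term leading. But per summand the It\^o expansion of $K(\tfrac{X_s-x}{h})-K(\tfrac{X_{i\Delta}-x}{h})$ is dominated by the martingale part $\int_{i\Delta}^{s}h^{-1}K'\sigma\,dW_u=O_p(h^{-1}\Delta^{1/2})$, not by the $h^{-2}K''$ drift part of order $h^{-2}\Delta$; once you take absolute values summand by summand, the martingale cancellation across $i$ is gone, and counting the $O_p(h\ell(T,x)/\Delta)$ active subintervals only yields $O_p(h^{-1}\Delta^{1/2}T_w\ell(T,x))$ (equivalently $O_p(h^{-1}\kappa_2 T_w\ell(T,x))$ with your crude Lipschitz bound), which under Assumption 3.6 ($\Delta^{1/8}/h^{2}=o(1)$, hence $\Delta\ll h^{16}$) is strictly \emph{larger} than the claimed $O_p(h^{-2}\Delta T_w\ell(T,x))$. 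It is still $o_p(\ell(T,x))$, so your route would salvage the second displayed equality, but it does not prove the first one with the stated error rate. The paper keeps the absolute value \emph{outside} the sum, i.e.\ bounds $\bigl|\sum_i w(X_{i\Delta})\int_{i\Delta}^{(i+1)\Delta}(K(\tfrac{X_s-x}{h})-K(\tfrac{X_{i\Delta}-x}{h}))\,ds\bigr|$ directly: the aggregated martingale part is controlled through its quadratic variation (with $|w(X_{i\Delta})|\le T_w$ absorbed there), after which the $h^{-2}K''$ contribution genuinely is the leading term. You correctly flagged this as the delicate point, but the fix is not the support-counting bookkeeping you propose; it is to refrain from taking summand-wise absolute values so that the martingale cancellation survives.
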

\begin{proof}
For the first equality, applying the triangular inequality and the mean value theorem, we have
\begin{flalign}
&\frac{\Delta}{h} \sum_{i=0}^{n-1} K\left(\frac{X_{i\Delta}-x}{h}\right) w\left(X_{i \Delta}\right)-\frac{1}{h}\int_{0}^{T}  K\left(\frac{X_{s}-x}{h}\right) w\left(X_{s}\right)ds  \nonumber\\
&\leq\frac{1}{h}\left|\sum\limits_{i=0}^{n-1} \int_{i\Delta}^{(i+1) \Delta}\left(K\left(\frac{X_{s}-x}{h}\right) -K\left(\frac{X_{i\Delta}-x}{h}\right)\right)w\left(X_{i\Delta}\right)
ds\right| \label{lemma3.7-proof-first part}\\
&~~+\frac{1}{h}\left|\sum\limits_{i=0}^{n-1} \int_{i\Delta}^{(i+1)\Delta} K\left(\frac{X_{s}-x}{h}\right)w'(\xi)(X_{s}-X_{i\Delta})ds\right|,\label{xi}
\end{flalign}
where $\xi$ in $(\ref{xi})$ is a value on the line segment connecting $X_s$ to $X_{i\Delta}$.
Using It\^o formula, we could get that $(\ref{lemma3.7-proof-first part})$ is of $O_{p}\left(h^{-2}\Delta T_{w}\ell(T,x)\right)$
uniformly in $T$ as $h \rightarrow 0$ and $\Delta \rightarrow 0$. Furthermore,
$(\ref{xi})$ is bounded by
\begin{flalign*}
\frac{\kappa_{2}T_{w'}}{h}\int_{0}^{T} \left|K\left(\frac{X_{s}-x}{h}\right)\right|ds
=\kappa_{2} T_{w'} \ell(T,x)\int_{-\infty}^{\infty}|K(u)|du
+o_{p}\left(\kappa_{2}T_{w'}\ell(T,x)\right)
\end{flalign*}
uniformly in $T$ as $h \rightarrow 0$ and $\Delta \rightarrow 0$.
\end{proof}

\begin{lemma}
\label{lemma-K*(g(X_s)-g(X_i))}
Let $w$ be continuously differentiable on $\mathcal{D}$. Then we have
\begin{flalign*}
&\frac{1}{h} \sum_{i=0}^{n-1} K\left(\frac{X_{i\Delta}-x}{h}\right)\int_{i\Delta}^{(i+1)\Delta}\left( w\left(X_{s}\right)-w\left(X_{i \Delta}\right)\right)ds
=O_{p}\left(\kappa_{2}T_{w'}\ell(T,x)\right)
\end{flalign*}
uniformly in $T$ as $h \rightarrow 0$ and $\Delta \rightarrow 0$.
\end{lemma}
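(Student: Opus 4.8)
The expression to be estimated is exactly the term $(\ref{xi})$ that was already bounded inside the proof of Lemma~\ref{lemma-K*g-discrete-continuous}; here it stands alone, and my plan is to reproduce that bound. First I would apply the mean value theorem to $w$: for each $i$ and each $s\in[i\Delta,(i+1)\Delta]$ there is a point $\xi$ on the segment from $X_{i\Delta}$ to $X_s$ with $w(X_s)-w(X_{i\Delta})=w'(\xi)(X_s-X_{i\Delta})$, whence
\begin{flalign*}
\bigl|w(X_s)-w(X_{i\Delta})\bigr|\le T_{w'}\,\bigl|X_s-X_{i\Delta}\bigr|\le T_{w'}\,\kappa_2,
\end{flalign*}
the last step using $|s-i\Delta|\le\Delta$ and the definition of $\kappa_2$.

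After substituting this bound and using the triangle inequality, the quantity in the statement is, in absolute value, at most
\begin{flalign*}
\frac{T_{w'}\kappa_2}{h}\sum_{i=0}^{n-1}\Bigl|K\Bigl(\tfrac{X_{i\Delta}-x}{h}\Bigr)\Bigr|\int_{i\Delta}^{(i+1)\Delta}\!ds
=T_{w'}\kappa_2\cdot\frac{\Delta}{h}\sum_{i=0}^{n-1}\Bigl|K\Bigl(\tfrac{X_{i\Delta}-x}{h}\Bigr)\Bigr|.
\end{flalign*}
Since $|K|$ inherits the properties of $K$ used in Lemma~\ref{lemma-discrete-continuous-K} (bounded, compactly supported, integrable), that lemma applied with $K$ replaced by $|K|$ gives $\frac{\Delta}{h}\sum_{i=0}^{n-1}|K((X_{i\Delta}-x)/h)| = O_p(\ell(T,x))$ uniformly in $T$ (the error term $O_p(h^{-2}\Delta\,\ell(T,x))$ there being absorbed under the standing rate conditions; alternatively one may pass to $\frac1h\int_0^T|K((X_s-x)/h)|\,ds$ and invoke the occupation-times formula, exactly as was done for $(\ref{xi})$ in the proof of Lemma~\ref{lemma-K*g-discrete-continuous}). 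Multiplying the two factors would then give the claimed $O_p(\kappa_2 T_{w'}\ell(T,x))$, uniformly in $T$ as $h\to0$ and $\Delta\to0$.

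I do not expect any real obstacle here: the entire analytic substance sits in the local-time/occupation-time estimate of Lemma~\ref{lemma-discrete-continuous-K} (itself imported from \cite{ait2016}), and the only points needing explicit mention are that $|K|$ still satisfies the hypotheses invoked there and that the bound is uniform in $T$ — both immediate, since Lemma~\ref{lemma-discrete-continuous-K} is stated uniformly in $T$ and its proof never used $K\ge0$.
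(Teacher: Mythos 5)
Your proof is correct and follows essentially the same route the paper itself uses: the paper gives no separate proof of this lemma, but the identical mean-value-theorem-plus-occupation-time bound is exactly how the analogous term $(\ref{xi})$ is handled in the proof of Lemma~\ref{lemma-K*g-discrete-continuous}. The only slip is your claim that the expression is \emph{exactly} the term $(\ref{xi})$ --- there the kernel is evaluated at $X_{s}$ rather than $X_{i\Delta}$ --- but this does not affect your argument, which bounds the stated quantity directly via $|w(X_s)-w(X_{i\Delta})|\le T_{w'}\kappa_2$ and Lemma~\ref{lemma-discrete-continuous-K} applied with $|K|$.
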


\begin{lemma}
\label{lemmaKmintgdt}
Let  $m$, $w$ be continuously differentiable on $\mathcal{D}$. Then we have
\begin{flalign*}
\frac{1}{h}& \sum_{i=0}^{n-1} K\left(\frac{X_{i\Delta}-x}{h}\right)
m\left(X_{i\Delta}\right)\int_{i\Delta}^{(i+1)\Delta}w(X_{s})ds\\
&=(m\cdot w)(x)\ell(T,x)
+O_{p}\left(h^{-2}\Delta T_{m\cdot w}\ell(T,x)\right)
+O_{p}\left(\kappa_{2}T_{(m\cdot w)'}\ell(T,x)\right)
+O_{p}(\kappa_{2}T_{w'}\ell(T,x))\\
&~~+O_{p}\left(h^{-2}\kappa_{2}\Delta T_{m}T_{w'}\ell(T,x)\right)
+
O_{p}\left(\kappa^{2}_{2}T_{m'}T_{w'}\ell(T,x)\right)
\end{flalign*}
uniformly in $T$ as $h \rightarrow 0$ and $\Delta \rightarrow 0$.
\end{lemma}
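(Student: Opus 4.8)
The plan is to reduce Lemma~\ref{lemmaKmintgdt} to the building blocks \ref{lemma-discrete-continuous-K}--\ref{lemma-K*(g(X_s)-g(X_i))} by two successive expansions. First I would freeze the time integrand at its left endpoint, $w(X_{s})=w(X_{i\Delta})+(w(X_{s})-w(X_{i\Delta}))$, which splits the sum as
\[
\frac{1}{h}\sum_{i=0}^{n-1} K\!\left(\frac{X_{i\Delta}-x}{h}\right) m(X_{i\Delta})\int_{i\Delta}^{(i+1)\Delta}w(X_{s})\,ds
=\frac{\Delta}{h}\sum_{i=0}^{n-1} K\!\left(\frac{X_{i\Delta}-x}{h}\right)(m\cdot w)(X_{i\Delta})+\mathcal{E},
\]
where $\mathcal{E}=\frac{1}{h}\sum_{i=0}^{n-1} K\!\left(\frac{X_{i\Delta}-x}{h}\right) m(X_{i\Delta})\int_{i\Delta}^{(i+1)\Delta}\big(w(X_{s})-w(X_{i\Delta})\big)\,ds$. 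The first term on the right is exactly of the form handled by Lemma~\ref{lemma-K*g-discrete-continuous}, applied with the $C^{1}$ function there taken to be the product $m\cdot w$ (which is $C^{1}$ on $\mathcal{D}$ because $m$ and $w$ are); this produces the main term $(m\cdot w)(x)\ell(T,x)$ together with the errors $O_{p}(h^{-2}\Delta T_{m\cdot w}\ell(T,x))$ and $O_{p}(\kappa_{2}T_{(m\cdot w)'}\ell(T,x))$.

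It then remains to show that the remainder $\mathcal{E}$ contributes only the last three error terms. For this I would expand the coefficient about $X_{s}$ as well, writing $m(X_{i\Delta})=m(X_{s})-(m(X_{s})-m(X_{i\Delta}))$, so that $\mathcal{E}$ becomes the difference of a term with integrand $m(X_{s})(w(X_{s})-w(X_{i\Delta}))$ and a term with the double increment $(m(X_{s})-m(X_{i\Delta}))(w(X_{s})-w(X_{i\Delta}))$. The double-increment term I would bound crudely: the mean value theorem gives $|m(X_{s})-m(X_{i\Delta})|\le T_{m'}\kappa_{2}$ and $|w(X_{s})-w(X_{i\Delta})|\le T_{w'}\kappa_{2}$, so it is dominated by $T_{m'}T_{w'}\kappa_{2}^{2}\cdot\frac{\Delta}{h}\sum_{i}|K((X_{i\Delta}-x)/h)|$, which is $O_{p}(\kappa_{2}^{2}T_{m'}T_{w'}\ell(T,x))$ once Lemma~\ref{lemma-discrete-continuous-K} is invoked with $K$ replaced by $|K|$ (its proof uses only boundedness of the kernel, not its sign or normalization). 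For the remaining term I would again use the mean value theorem on $w(X_{s})-w(X_{i\Delta})=w'(\xi_{s})(X_{s}-X_{i\Delta})$, treat $m(X_{s})w'(\xi_{s})$ as a coefficient bounded by $T_{m}T_{w'}$, and carry out the discrete-to-continuous passage of $\frac{1}{h}\sum_{i}|K((X_{i\Delta}-x)/h)|\int_{i\Delta}^{(i+1)\Delta}|X_{s}-X_{i\Delta}|\,ds$ exactly as in the proof of Lemma~\ref{lemma-K*(g(X_s)-g(X_i))}; this yields the error terms $O_{p}(\kappa_{2}T_{w'}\ell(T,x))$ and $O_{p}(h^{-2}\kappa_{2}\Delta T_{m}T_{w'}\ell(T,x))$. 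Collecting the pieces gives the stated expansion, and uniformity in $T$ is inherited, term by term, from the cited lemmas.

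The one place that needs genuine care — rather than routine algebra — is the repeated passage from the discrete Riemann-type sums to the continuous occupation-time integrals in every error estimate. As in the proof of Lemma~\ref{lemma-K*g-discrete-continuous}, this is done by applying It\^o's formula to $K((\,\cdot\,-x)/h)$ over each subinterval; the second derivative of the rescaled kernel is what generates the factor $h^{-2}\Delta\,\ell(T,x)$, and one has to check that every remainder so produced is a product of $\ell(T,x)$ with prefactors ($\kappa_{2}$, $h^{-2}\Delta$, and the path maxima $T_{\bullet}$) that are negligible under Assumption~\ref{assum-local time} and the bandwidth/sampling conditions of Assumptions~\ref{assumf} and \ref{assumz}. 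No new idea beyond these building blocks is required; the work is entirely in organizing the expansion so that no uncontrolled cross term survives.
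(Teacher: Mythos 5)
Your decomposition is exactly the one the paper uses: split $w(X_{s})=w(X_{i\Delta})+(w(X_{s})-w(X_{i\Delta}))$, apply Lemma~\ref{lemma-K*g-discrete-continuous} to the product $m\cdot w$ for the main term and the first two errors, and control the remainder via the increment bound together with the discrete-to-continuous passage of Lemmas~\ref{lemma-discrete-continuous-K} and~\ref{lemma-K*(g(X_s)-g(X_i))}; the paper's own proof is precisely this two-line reduction. One small imprecision: in the cross term you say you would ``treat $m(X_{s})w'(\xi_{s})$ as a coefficient bounded by $T_{m}T_{w'}$,'' but bounding the coefficient uniformly by $T_{m}T_{w'}$ only yields $O_{p}(\kappa_{2}T_{m}T_{w'}\ell(T,x))$, which is \emph{not} dominated by the claimed $O_{p}(\kappa_{2}T_{w'}\ell(T,x))+O_{p}(h^{-2}\kappa_{2}\Delta T_{m}T_{w'}\ell(T,x))$ since $T_{m}$ may diverge with $T$. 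To get the stated terms you must keep $m$ inside the kernel-weighted sum and apply Lemma~\ref{lemma-K*g-discrete-continuous} to $|m|$ (with $|K|$), so that the localization of $X_{i\Delta}$ near $x$ replaces $T_{m}$ by the constant $|m(x)|$ in the leading contribution --- exactly the mechanism you already use correctly for the double-increment term. With that fix the argument is complete and coincides with the paper's.
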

\begin{proof}
We write
\begin{flalign*}
&\frac{1}{h} \sum_{i=0}^{n-1} K\left(\frac{X_{i\Delta}-x}{h}\right)
m\left(X_{i\Delta}\right)\int_{i\Delta}^{(i+1)\Delta}w(X_{s})ds\\
&=\frac{\Delta}{h} \sum_{i=0}^{n-1} K\left(\frac{X_{i\Delta}-x}{h}\right)
m\left(X_{i\Delta}\right)w(X_{i\Delta})+\frac{1}{h} \sum_{i=0}^{n-1} K\left(\frac{X_{i\Delta}-x}{h}\right)m\left(X_{i\Delta}\right)\int_{i\Delta}^{(i+1)\Delta}
\left(w(X_{s})-w(X_{i\Delta})\right)ds,
\end{flalign*}
then the result could be obtained with the application of Lemma $\ref{lemma-K*g-discrete-continuous}$ and Lemma $\ref{lemma-K*(g(X_s)-g(X_i))}$.
\end{proof}

\begin{lemma}
\label{lemmakww}
Assume that $w$ is continuously differentiable on $\mathcal{D}$. Then we have,
\begin{flalign*}
\frac{1}{h}\int_{0}^{T}K\left(\frac{X_{s}-x}{h}\right)
\left(w(X_{s})-w(x)\right)ds
=l(K_{1})w'(x)\frac{s'(x)}{s(x)}h^{2}\ell(T,x)
+ O_p(h^{1/2}\ell(T,x)^{1/2})+o_{p}(h\ell(T,x)).
\end{flalign*}
\end{lemma}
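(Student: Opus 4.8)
The plan is to replace the time integral by a spatial (occupation–density) integral, rescale by $h$, and then read off the expansion from the joint regularity of $w$ and of the local time $\ell(T,\cdot)$ near $x$.

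First I would apply the occupation times formula for the recurrent diffusion $X$, in the normalization of \cite{ait2016} (cf. Lemma~\ref{lemma-discrete-continuous-K}), namely $\int_0^T\phi(X_s)\,ds=\int_{\mathcal D}\phi(a)\,\ell(T,a)\,da$. Taking $\phi(a)=\tfrac1h K\!\big(\tfrac{a-x}{h}\big)\big(w(a)-w(x)\big)$ and substituting $a=x+hu$,
\begin{flalign*}
\frac1h\int_0^T K\!\Big(\frac{X_s-x}{h}\Big)\big(w(X_s)-w(x)\big)\,ds=\int_{-\infty}^{\infty}K(u)\,\big(w(x+hu)-w(x)\big)\,\ell(T,x+hu)\,du .
\end{flalign*}
On the compact support of $K$ one has $w(x+hu)-w(x)=hu\,w'(x)+\tfrac12 h^2u^2 w''(x)+O(h^3|u|^3)$. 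If $\ell$ were frozen at $x$ the $hu\,w'(x)$ term would drop out because $\int uK(u)\,du=0$; hence the $h^2\ell(T,x)$–order behaviour is entirely produced by the first–order spatial variation of $a\mapsto\ell(T,a)$ at $x$, and pinning that down is the crux.

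To quantify it I would pass to natural scale: set $U_t:=S(X_t)$, a continuous local martingale with $\mathrm d\langle U\rangle_t=(S'\sigma)^2(X_t)\,\mathrm dt$, and let $L^U_T(\cdot)$ be its semimartingale local time. Comparing the occupation times formulas for $X$ and for $U$ under $b=S(a)$ gives the identity $\ell(T,a)=\tfrac{s(a)}{2}\,L^U_T(S(a))$ (using $s(a)=2/S'(a)\sigma^2(a)$). The prefactor $\tfrac{s(a)}{2}$ is smooth, with $\tfrac{\mathrm d}{\mathrm da}\log\tfrac{s(a)}{2}=\tfrac{s'(a)}{s(a)}$, whereas $L^U_T(\cdot)$ is merely Hölder: the classical local–time estimates for continuous martingales used in \cite{ait2016}, \cite{bandi2003} give, uniformly in $T$ and over $|u|\le R$ with $R$ the support radius of $K$, both $\big|L^U_T(S(x+hu))-L^U_T(S(x))\big|=O_p\big((h|u|)^{1/2}\ell(T,x)^{1/2}\big)$ and $\sup_{|u|\le R}L^U_T(S(x+hu))=O_p(\ell(T,x))$ — the latter being exactly where Assumption~\ref{assum-local time} is invoked. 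Multiplying the two factors yields, for $|u|\le R$,
\begin{flalign*}
\ell(T,x+hu)=\ell(T,x)+hu\,\frac{s'(x)}{s(x)}\,\ell(T,x)+O_p\big((h|u|)^{1/2}\ell(T,x)^{1/2}\big)+O_p\big(h^2u^2\ell(T,x)\big).
\end{flalign*}

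Finally I would plug both expansions into the integral and integrate termwise against $K$, using only $\int K=1$, $\int uK=0$, $\int u^2K=l(K_1)$, and the finiteness of $\int|u|^{3/2}K$ and $\int|u|^3K$. The cross term of $hu\,w'(x)$ with $hu\,\tfrac{s'(x)}{s(x)}\ell(T,x)$ gives precisely $h^2 l(K_1)\,w'(x)\,\tfrac{s'(x)}{s(x)}\,\ell(T,x)$; the pairing of $hu\,w'(x)$ with the fluctuation term is $O_p(h^{3/2}\ell(T,x)^{1/2})\subseteq O_p(h^{1/2}\ell(T,x)^{1/2})$; and every remaining product — notably the curvature term $\tfrac12 h^2 l(K_1)w''(x)\ell(T,x)$ — is $O_p(h^2\ell(T,x))=o_p(h\ell(T,x))$ since $h\to0$. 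Assembling these contributions gives the stated expansion. I expect the genuine obstacle to be the middle step: establishing, uniformly in $T$, the split of $\ell(T,\cdot)$ near $x$ into a smooth part with logarithmic derivative $s'/s$ plus a martingale–local–time part whose increment over an $O(h)$ window is only $O_p(h^{1/2}\ell(T,x)^{1/2})$; once that regularity is in place via the scale transformation and the cited local–time bounds, the remainder is routine moment bookkeeping.
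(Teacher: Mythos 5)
Your argument is correct, and since the paper states Lemma \ref{lemmakww} without proof (implicitly deferring to the corresponding expansions in \cite{ait2016}), your reconstruction --- occupation-times formula, substitution $a=x+hu$, and the splitting $\ell(T,a)=\tfrac{s(a)}{2}L^{U}_{T}(S(a))$ into a smooth speed-density factor with logarithmic derivative $s'/s$ plus a H\"older-$\tfrac12$ martingale local time --- is exactly the standard route that underlies the cited results, and it correctly identifies that the $l(K_1)w'(x)\tfrac{s'(x)}{s(x)}h^2\ell(T,x)$ term arises from pairing $hu\,w'(x)$ with the first-order spatial variation of $\ell(T,\cdot)$. Two minor points: the second-order Taylor term $\tfrac12 h^2u^2w''(x)$ is not available under the stated $C^1$ hypothesis, but it is unnecessary since the $C^1$ remainder $o(h)$ already lands in $o_p(h\ell(T,x))$; and Assumption \ref{assum-local time}(i) only gives $\bar\ell_{\varepsilon}(T,x)=O_p(\ell(T,x)^2)$ rather than the $O_p(\ell(T,x))$ sup bound you invoke, which degrades the fluctuation increment to $O_p((h|u|)^{1/2}\ell(T,x))$ --- still harmless, as the resulting contribution $O_p(h^{3/2}\ell(T,x))$ is absorbed by $o_p(h\ell(T,x))$.
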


\begin{lemma}
\label{lemmaukww}
Let $w$ is continuously differentiable on $\mathcal{D}$. Then we have,
\begin{flalign*}
\frac{1}{h}\int_{0}^{T}\frac{X_{s}-x}{h}K\left(\frac{X_{s}-x}{h}\right)
\left(w(X_{s})-w(x)\right)ds
=l(K_{1}) w'(x)h\ell(T,x) + o_p(h\ell(T, x)).
\end{flalign*}
\end{lemma}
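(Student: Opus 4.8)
The plan is to run the argument along the same lines as the proof of Lemma~\ref{lemmakww}, the only structural differences being that the weight $uK(u)$ is odd rather than even --- so the occupation-time integral now picks up the second moment $\int u^{2}K(u)\,du = l(K_{1})$ instead of a curvature term --- and the $s'(x)/s(x)$ correction generated by the spatial fluctuation of the local time cancels by parity. First I would invoke the occupation times formula for the diffusion $X$, in the normalization for which $\int_{0}^{T}\varphi(X_{s})\,ds = \int_{\mathbb{R}}\varphi(a)\,\ell(T,a)\,da$ (the same normalization underlying Lemma~\ref{lemma-discrete-continuous-K}), together with the change of variables $a = x+hu$, to rewrite the left-hand side as $\int_{\mathbb{R}} uK(u)\bigl(w(x+hu)-w(x)\bigr)\ell(T,x+hu)\,du$. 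Splitting $\ell(T,x+hu) = \ell(T,x) + \bigl(\ell(T,x+hu)-\ell(T,x)\bigr)$ then decomposes this integral as $\ell(T,x)\,A_{h} + B_{h}$, where $A_{h} = \int uK(u)\bigl(w(x+hu)-w(x)\bigr)\,du$ and $B_{h}$ carries the local-time increment.

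For the main term $\ell(T,x)A_{h}$: since $w\in C^{1}(\mathcal{D})$ I would write $w(x+hu)-w(x) = w'(x)hu + hu\,\rho_{h}(u)$ with $\sup_{u\in\mathrm{supp}\,K}|\rho_{h}(u)|\to 0$ as $h\to 0$ (uniform continuity of $w'$ on a neighbourhood of $x$); Assumption~\ref{assum-kernel function} then gives $A_{h} = l(K_{1})w'(x)h + o(h)$, so $\ell(T,x)A_{h} = l(K_{1})w'(x)h\,\ell(T,x) + o_{p}(h\ell(T,x))$. For $B_{h}$, the same expansion gives $|w(x+hu)-w(x)| = O_{p}(h)$ uniformly over the support of $K$, whence $|B_{h}| \le O_{p}(h)\int|uK(u)|\,du \cdot \sup_{|v|\le ch}|\ell(T,x+v)-\ell(T,x)|$, with $c$ the radius of the support. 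Here I would invoke the spatial modulus-of-continuity estimate for the chronological local time of a recurrent diffusion (as used in \cite{ait2016}, and where Assumption~\ref{assum-local time} enters): $\sup_{|v|\le ch}|\ell(T,x+v)-\ell(T,x)| = O_{p}\bigl(h^{1/2}\ell(T,x)^{1/2}\bigr)$ up to lower-order factors, uniformly in $T$. Hence $B_{h} = O_{p}\bigl(h^{3/2}\ell(T,x)^{1/2}\bigr) = o_{p}(h\ell(T,x))$ since $h/\ell(T,x)\to 0$, and adding the two pieces yields the claimed identity. It is worth noting that the would-be order-$h^{2}\ell(T,x)$ contribution to $B_{h}$ is proportional to $\int u^{3}K(u)\,du = 0$ here, in contrast to Lemma~\ref{lemmakww}, where the analogous term survives and produces the $s'(x)/s(x)$ factor.

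The Taylor expansion of $w$ and the moment bookkeeping for $K$ are routine. The one genuinely delicate ingredient --- and the main obstacle --- is the bound on $\sup_{|v|\le ch}|\ell(T,x+v)-\ell(T,x)|$ relative to $\ell(T,x)$, uniformly in $T$: one needs that the spatial increments of the chronological local time over an $O(h)$ window are of order $(h\ell(T,x))^{1/2}$ up to logarithmic factors. This is precisely the local-time regularity result exploited throughout \cite{ait2016,bandi2003}; everything else in the proof follows from it.
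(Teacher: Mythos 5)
The paper states Lemma \ref{lemmaukww} (and its companion Lemma \ref{lemmakww}) without proof, so there is nothing to compare against line by line; judged on its own, your argument is correct and is the natural one in this framework. The occupation-time formula in the chronological normalization is exactly what underlies Lemma \ref{lemma-discrete-continuous-K}, the change of variables and the $C^{1}$ Taylor expansion of $w$ are routine, and your parity observation correctly explains why no $s'(x)/s(x)$ term survives here (it would enter multiplied by $\int u^{3}K(u)\,du=0$), in contrast to Lemma \ref{lemmakww}. Two small points of care. First, your bound $B_{h}=O_{p}\bigl(h^{3/2}\ell(T,x)^{1/2}\bigr)=o_{p}(h\ell(T,x))$ needs $h/\ell(T,x)\to 0$; this is harmless in the regime where the lemma is applied ($\ell(T,x)\to\infty$ for the recurrent case, or $\ell(T,x)$ bounded away from zero in probability for fixed $T$), but it is an implicit hypothesis worth stating. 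Second, you are right that the only genuinely nontrivial ingredient is the spatial modulus of continuity of the local time; note that the paper's Assumption \ref{assum-local time}(i) (a sup bound $\bar{\ell}_{\varepsilon}(T,x)=O_{p}(\ell(T,x)^{2})$) is \emph{not} sufficient by itself to control $B_{h}$ --- a crude bound via $\bar{\ell}_{\varepsilon}$ only gives $O_{p}(h\,\ell(T,x)^{2})$ --- so the H\"older-$\tfrac12$ increment estimate must indeed be imported from \cite{ait2016,bandi2003} (or obtained by combining the standard $|v|^{1/2}(\log(1/|v|))^{1/2}\bar{\ell}^{1/2}$ increment bound with Assumption \ref{assum-local time}(i), which yields $B_{h}=O_{p}(h^{3/2}(\log(1/h))^{1/2}\ell(T,x))=o_{p}(h\ell(T,x))$ without the extra $h/\ell(T,x)\to0$ requirement). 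You identify this dependency explicitly, which is the main thing; with that reference supplied, the proof is complete.
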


\begin{proof}[Proof of Theorem $\ref{thm-g-NW}$]
The proof is analogous to the proof of Theorem \ref{thm-z-NW}, we omit it here.
\end{proof}

\begin{proof}[Proof of Theorem $\ref{thm-g-LL}$]
Similar to the proof of $\ref{lemma-discrete-continuous-K}$, we can obtain that
\begin{flalign}
\label{proofDK2}
D(K_{2})=l(K_{1})\ell(T,x)+o_{p}(\ell(T,x))+O_{p}\left(\frac{\Delta}{h^{2}}\ell(T,x)\right),
\end{flalign}
uniformly in $T$ as $h\to 0$ and $\Delta\to 0$. It follows from Lemma $\ref{lemmakww}$ with $w=1$ that
\begin{flalign}
\label{proofDK1}
D(K_{1})=l(K_{1})\frac{s'(x)}{s(x)}h\ell(T,x) + o_p(h\ell(T, x))
\end{flalign}
uniformly in $T$ as $h\to 0$ and $\Delta\to 0$. Then, we can get,
\begin{flalign*}
D(K)D(K_{2})-D(K_{1})^{2}=l(K_{1})\ell(T,x)^{2}+o_{p}(\ell(T,x)^{2})
\end{flalign*}
uniformly in $T$ as $h\to 0$ and $\Delta\to 0$.
Applying Lemma $\ref{lemma-K*g-discrete-continuous}$ and $\ref{lemmaukww}$ with $w=\tilde{f}$, we can deduce that
\begin{flalign}
B(K_{1},\tilde{f})
&=\frac{1}{h}\int_{0}^{T}\frac{X_{s}-x}{h}K\left(\frac{X_{s}-x}{h}\right)
\left(\tilde{f}(X_{s})-\tilde{f}(x)\right)ds
+O_{p}(h^{-2}\Delta T_{\tilde{f}}\ell(T,x))+O_{p}(\kappa_{2}T_{\tilde{f}'}\ell(T,x)) \nonumber\\
&=l(K_{1})\tilde{f}'(x)h\ell(T,x)+o_{p}(h\ell(T,x))
+O_{p}(h^{-2}\Delta T_{\tilde{f}}\ell(T,x))+O_{p}(\kappa_{2}T_{\tilde{f}'}\ell(T,x)) \label{proofBK1F}
\end{flalign}
uniformly in $T$ as $h\to 0$ and $\Delta\to 0$. According to Lemma $B.2$ in \cite{Kim2017}, $\kappa_{2}
=O_{p}(\Delta T_{b})+O_{p}\left(T_{\sigma}(\Delta\log n)^{1/2}\right) $. Moreover, combining $(\ref{proofDK2})$, $(\ref{proofDK1})$ and $(\ref{proofBK1F})$, we have,
\begin{flalign*}
&B(K,\tilde{f})D(K_{2})-B(K_{1},\tilde{f})D(K_{1})  \\
&=
\left[l(K_{1})\tilde{f}'(x)\frac{s'(x)}{s(x)}h^{2}\ell(T,x)
+ O_p(h^{1/2}\ell(T,x)^{1/2})+o_{p}(h\ell(T,x))+O_{p}(\kappa_{2}T_{\tilde{f}'}\ell(T,x))\right] \\
&~~\times\left[l(K_{1})\ell(T,x)+o_{p}(\ell(T,x))+O_{p}\left(\frac{\Delta}{h^{2}}\ell(T,x)\right)\right] \\
&~~-\left[l(K_{1})\tilde{f}'(x)h\ell(T,x)+o_{p}(h\ell(T,x))+O_{p}(\kappa_{2}T_{\tilde{f}'}\ell(T,x))\right]
\left[l(K_{1})\frac{s'}{s}(x)h\ell(T,x) + o_p(h\ell(T, x)) \right]\\
&=O_p(h^{1/2}\ell(T,x)^{3/2})+o_p(h\ell(T,x)^{2})
\end{flalign*}
uniformly in $T$ as $h\to 0$ and $\Delta\to 0$.  In consequence, we have,
\begin{flalign*}
\hat{g}_{p,\text{LL}}(x)-\tilde{f}(x)
=o_p(h)+O_p(h^{1/2}\ell(T,x)^{-1/2})
\end{flalign*}
uniformly in $T$ as $h\to 0$ and $\Delta\to 0$.

For $\hat{g}_{q,\text{LL}}(x)$, following the following proof of $M(K,Z)$, we have,
\begin{flalign*}
M(K_{1},\tilde{f})=O_p(h^{-1/2}\ell(T,x)^{1/2})
\end{flalign*}
uniformly in $T$ as $h\to 0$ and $\Delta\to 0$. Then,
\begin{flalign*}
M(K_{1},\tilde{f})D(K_{1})
=O_p(h^{-1/2}\ell(T,x)^{1/2})O_p(h\ell(T,x))
=O_p(h^{1/2}\ell(T,x)^{3/2})
\end{flalign*}
uniformly in $T$ as $h\to 0$ and $\Delta\to 0$. Therefore, we have,
\begin{flalign*}
&(h\ell(T,x))^{1/2}\hat{g}_{q,\text{LL}}(x) \\
&=(h\ell(T,x))^{1/2}\frac{M(K,\tilde{f})D(K_{2})-M(K_{1},\tilde{f})D(K_{1})}{D(K)D(K_{2})-D(K_{1})^2}   \\
&=(h\ell(T,x))^{1/2}\frac{l(K_{1})\ell(T,x)(1+o_{p}(1))}{l(K_{1})\ell(T,x)^{2}(1+o_{p}(1))}M(K,\tilde{f})
+\frac{O_p(h^{1/2}\ell(T,x)^{3/2})}{O_p(\ell(T,x)^{2})} \\
&=\frac{O_p(h^{1/2}M(K,\tilde{f}))}{\ell(T,x)^{1/2}}+o_p(1).
\end{flalign*}
uniformly in $T$ as $h\to 0$ and $\Delta\to 0$. As shown in the proof of $M(K,Z)$, we can get,
\begin{flalign*}
\frac{h^{1/2}M(K,\tilde{f})}{\ell(T,x)^{1/2}}\Rightarrow l(K_{2})^{1/2}\tilde{Z}(x)\mathbb{N}.
\end{flalign*}

Similar to the proof of Lemma \ref{lemma-K*(g(X_s)-g(X_i))}, we can get
\begin{flalign*}
R(K_{1},\tilde{f})=o_{p}(h^{2}\ell(T,x))
\end{flalign*}
uniformly in $T$ as $h\to 0$ and $\Delta\to 0$. Then,
\begin{flalign*}
R(K,\tilde{f})D(K_{2})-R(K_{1},\tilde{f})D(K_{1})
=o_{p}(h^{2}\ell(T,x)^{2})+o_{p}(h^{3}\ell(T,x)^{2})
\end{flalign*}
uniformly in $T$ as $h\to 0$ and $\Delta\to 0$. It follows that
\begin{flalign*}
\hat{g}_{r,\text{LL}}(x)=o_{p}(h^{2}). 
\end{flalign*}
Then the proof is complete.
\end{proof}

\begin{proof}[Proof of Theorem $\ref{thm-z-NW}$]
Applying Lemma $\ref{lemma-K*g-discrete-continuous}$ and $\ref{lemmakww}$, we have,
\begin{flalign*}
&\frac{\Delta}{h} \sum_{i=0}^{n-1} K\left(\frac{X_{i\Delta}-x}{h}\right) \left(\tilde{Z}^{2}(X_{i\Delta})-\tilde{Z}^{2}(x)\right)        \\
&=\frac{1}{h}\int_{0}^{T}  K\left(\frac{X_{s}-x}{h}\right) \left(\tilde{Z}^{2}(X_{s})-\tilde{Z}^{2}(x)\right)ds
+O_{p}\left(h^{-2}\Delta T_{\tilde{Z}^{2}}\ell(T,x)\right)
+O_{p}\left(\kappa_{2}T_{(\tilde{Z}^{2})'}\ell(T,x)\right)      \\
&=(\tilde{Z}^{2})'(x)hl(K_{1})\frac{s'(x)}{s(x)}h\ell(T,x)
+o_{p}(h\ell(T,x))+O_{p}(h^{1/2}\ell(T,x)^{1/2})    \\
&~~+O_{p}\left(h^{-2}\Delta T_{\tilde{Z}^{2}}\ell(T,x)\right)
+O_{p}\left(\kappa_{2}T_{(\tilde{Z}^{2})'}\ell(T,x)\right),
\end{flalign*}
uniformly in $T$ as $h\to 0$ and $\Delta\to 0$.
Then, $(\ref{proofthmzp})$ can be obtained. For the second part $\hat{Z}^{2}_{q,\text{NW}}(x)$, we analyse the numerator (denoted as $A(x)$) first,  define a continuous martingale $M$ such that
\begin{flalign}
&M_{T}=\sqrt{\frac{4}{h\Delta}} \sum_{i=0}^{n-1} K\left(\frac{X_{i\Delta}-x}{h}\right) \int_{i\Delta}^{(i+1)\Delta}(Y_{t}-Y_{i\Delta})Z_{t}dW_{t}.
\end{flalign}
Note that $M_{T}=\sqrt{\frac{h}{\Delta}}A(x)$. Using It\^o formula, we can deduce that
\begin{flalign*}
[M]_{T}&=\frac{4}{h\Delta} \sum_{i=0}^{n-1} K^{2}\left(\frac{X_{i\Delta}-x}{h}\right) \int_{i\Delta}^{(i+1)\Delta} (Y_{t}-Y_{i\Delta})^{2}\tilde{Z}^2(X_{t})dt   \\
&=\frac{4}{h\Delta} \sum_{i=0}^{n-1} K^{2}\left(\frac{X_{i\Delta}-x}{h}\right) \bigg[
\tilde{Z}^2\left(X_{i\Delta}\right) \int^{(i+1)\Delta}_{i\Delta}\left(Y_{t}-Y_{i \Delta}\right)^{2}dt
+\int^{(i+1)\Delta}_{i \Delta}\left(Y_{t}-Y_{i \Delta}\right)^{2}\left(\tilde{Z}^2\left(X_{t}\right)-\tilde{Z}^2(X_{i\Delta})\right)dt\bigg]   \\
&=\frac{2\Delta}{h} \sum_{i=0}^{n-1} K^{2}\left(\frac{X_{i\Delta}-x}{h}\right)\tilde{Z}^{4}\left(X_{i\Delta}\right)  \\
&~~+\frac{8}{h\Delta} \sum_{i=0}^{n-1} K^{2}\left(\frac{X_{i\Delta}-x}{h}\right)\tilde{Z}^{2}\left(X_{i\Delta}\right)\int^{(i+1)\Delta}_{i \Delta}\left((i+1)\Delta-t\right)\left(Y_{t}-Y_{i \Delta}\right) \tilde{f}\left(X_{t}\right)dt    \\
&~~+\frac{4}{h\Delta} \sum_{i=0}^{n-1} K^{2}\left(\frac{X_{i\Delta}-x}{h}\right)\tilde{Z}^2\left(X_{i\Delta}\right)\int^{(i+1)\Delta}_{i \Delta}\left((i+1)\Delta-t\right)\left(\tilde{Z}^2(X_{t})-\tilde{Z}^2(X_{i\Delta})\right)dt   \\
&~~+\frac{8}{h\Delta} \sum_{i=0}^{n-1} K^{2}\left(\frac{X_{i\Delta}-x}{h}\right)\tilde{Z}^2\left(X_{i\Delta}\right) \int^{(i+1)\Delta}_{i \Delta}\left((i+1)\Delta-t\right)\left(Y_{t}-Y_{i \Delta}\right) \tilde{Z}(X_{t})dW_{t}     \\
&~~+\frac{4}{h\Delta} \sum_{i=0}^{n-1} K^{2}\left(\frac{X_{i\Delta}-x}{h}\right)
\int^{(i+1)\Delta}_{i \Delta}\left(Y_{t}-Y_{i \Delta}\right)^{2}\left(\tilde{Z}^2\left(X_{t}\right)-\tilde{Z}^2(X_{i\Delta})\right)dt   \\
&=:A_{1}+A_{2}+A_{3}+A_{4}+A_{5},
\end{flalign*}
which will show in detail as follows.

With the application of Lemma $\ref{lemma-K*g-discrete-continuous}$, we have,
\begin{flalign*}
A_{1}
&=\frac{2}{h}\int_{0}^{T}K^{2}\left(\frac{X_{t}-x}{h}\right)\tilde{Z}^{4}\left(X_{t}\right)dt
+O_{p}(h^{-2}\Delta T_{\tilde{Z}^{4}}\ell(T,x))
++O_{p}(\kappa_{2}T_{(\tilde{Z}^{4})'}\ell(T,x))   \\
&=\tilde{Z}^{4}(x)l(K_{2})\ell(T,x)(1+o_{p}(1))
\end{flalign*}
uniformly in $T$ as $h\to 0$ and $\Delta\to 0$.
Applying Lemma \ref{lemmaKmintgdt}, we can get,
\begin{flalign*}
A_{2}&=O_{p}\left(\kappa_{1}\ell(T,x)\right)
+O_{p}\left(\frac{\Delta}{h^{2}}\kappa_{1}T_{\tilde{Z}^2|\tilde{f}|}\ell(T,x)\right)
+O_{p}\left(\kappa_{1}\kappa_{2}T_{(\tilde{Z}^2|\tilde{f}|)'}\ell(T,x)\right)
+O_{p}\left(\kappa_{1}\kappa_{2}T_{|\tilde{f}|'}\ell(T,x)\right)      
\end{flalign*}
uniformly in $T$ as $h\to 0$ and $\Delta\to 0$, where $\kappa_{1}=\sup\limits_{t,s\in[0,T],|t-s|\leq\Delta}|Y_{t}-Y_{s}|
=O_{p}(\Delta T_{\tilde{f}})+O_{p}\left(T_{\tilde{Z}}(\Delta\log n)^{1/2}\right) $.
$A_{3}$ can be bounded by
\begin{flalign*}
&4\kappa_{2}T_{(\tilde{Z}^2)'}\frac{\Delta}{h}\sum_{i=0}^{n-1} K^{2}\left(\frac{X_{i\Delta}-x}{h}\right)\tilde{Z}^2\left(X_{i\Delta}\right)  \\
&=O_{p}\left(\kappa_{2}T_{(\tilde{Z}^2)'}\ell(T,x)\right)
+O_{p}\left( \frac{\Delta}{h^{2}}\kappa_{2}T_{(\tilde{Z}^2)'} T_{\tilde{Z}^2}\ell(T,x) \right)
+O_{p}\left(\kappa_{2}^{2}T_{(\tilde{Z}^2)'}^{2}\ell(T,x) \right).
\end{flalign*}
uniformly in $T$ for all $x\in \mathcal{D}$ as $\Delta \rightarrow 0$ and $h \rightarrow 0$ with the application of Lemma $\ref{lemma-K*g-discrete-continuous}$.
Similarly, we have,
\begin{flalign*}
A_{4}
&=O_{p}\left(\frac{\kappa_{1}}{h^{1/2}}\ell(T,x)^{1/2}\right)
+O_{p}\left(\frac{\Delta^{1/2}}{h^{3/2}} \kappa_{1} T_{\tilde{Z}^{3}}\ell(T,x)^{1/2} \right)
+O_{p}\left(\frac{\kappa_{1}\kappa_{2}^{1/2}}{h^{1/2}} T_{(\tilde{Z}^{6})'}^{1/2}\ell(T,x)^{1/2}\right)    \\
&~~+O_{p}\left(\frac{\kappa_{1}\kappa_{2}^{1/2}}{h^{1/2}} T_{(\tilde{Z}^2)'}^{1/2}\ell(T,x)^{1/2}\right)
+O_{p}\left( \frac{\Delta^{1/2}\kappa_{1}\kappa_{2}^{1/2}}{h^{3/2}} T_{(\tilde{Z}^2)'}^{1/2}T_{\tilde{Z}^2}\ell(T,x)^{1/2}\right)
+O_{p}\left(\frac{\kappa_{1}\kappa_{2}}{h^{1/2}} T_{(\tilde{Z}^2)'}^{1/2}T_{(\tilde{Z}^{4})'}^{1/2}\ell(T,x)^{1/2}\right)  \\
&=o_{p}\left(\ell(T,x)\right)
\end{flalign*}
uniformly in $T$ as $\Delta \rightarrow 0$ and $h \rightarrow 0$.
For $A_{5}$, it can be bounded by
\begin{flalign*}
&\frac{4\kappa_{1}T_{(\tilde{Z}^2)'}}{h\Delta} \sum_{i=0}^{n-1} K^{2}\left(\frac{X_{i\Delta}-x}{h}\right)
\int^{(i+1)\Delta}_{i \Delta}\left(Y_{t}-Y_{i \Delta}\right)^{2}dt \\
&=o_{p}\left(\frac{1}{h\Delta} \sum_{i=0}^{n-1} K^{2}\left(\frac{X_{i\Delta}-x}{h}\right)
\int^{(i+1)\Delta}_{i \Delta}\left(Y_{t}-Y_{i \Delta}\right)^{2}dt\right)
=o_{p}\left(\ell(T,x)\right)
\end{flalign*}
uniformly in $T$ as $\Delta \rightarrow 0$ and $h \rightarrow 0$.
Therefore, we have,
$[M]_{T}=\tilde{Z}^{4}(x)l(K_{2})\ell(T,x)(1+o_{p}(1))$ uniformly in $T$ for all $x\in \mathcal{D}$ as $\Delta \rightarrow 0$ and $h \rightarrow 0$.

Moreover, we have
\begin{flalign*}
&[W,M]_{T}  \\
&=\sqrt{\frac{4}{h \Delta}} \sum_{i=0}^{n-1} K\left(\frac{X_{i \Delta}-x}{h}\right) \int^{(i+1)\Delta}_{i\Delta}\left(Y_{t}-Y_{i\Delta}\right) \tilde{Z}\left(X_{t}\right) dt  \\
&=\sqrt{\frac{4}{h \Delta}} \sum_{i=0}^{n-1} K\left(\frac{X_{i\Delta}-x}{h}\right) \tilde{Z}(X_{i\Delta})\int_{i\Delta}^{(i+1)\Delta}
((i+1)\Delta-t)\tilde{f}\left(X_{t}\right)dt        \\
&~~+\sqrt{\frac{4}{h \Delta}} \sum_{i=0}^{n-1} K\left(\frac{X_{i\Delta}-x}{h}\right) \tilde{Z}(X_{i\Delta})\int_{i\Delta}^{(i+1)\Delta}
((i+1)\Delta-t)\tilde{Z}\left(X_{t}\right)dW_{t}
\\
&~~+\sqrt{\frac{4}{h \Delta}} \sum_{i=0}^{n-1}K\left(\frac{X_{i\Delta}-x}{h}\right) \int_{i\Delta}^{(i+1)\Delta} \left(Y_{t}-Y_{i\Delta}\right) \left(\tilde{Z}(X_{t})-\tilde{Z}(X_{i\Delta})\right)dt \\
&=O_{p}\left(h^{1/2}\Delta^{1/2}  \ell(T,x)\right)
+O_{p}\left(h^{-3/2}\Delta^{3/2}T_{\tilde{f}} T_{\tilde{Z}}\ell(T,x)\right)
+O_{p}\left(h^{1/2}\Delta^{1/2}\kappa_{2}T_{(\tilde{Z}\cdot |\tilde{f}|)'}\ell(T,x)\right)    \\
&~~+O_{p}\left(h^{1/2}\Delta^{1/2}\kappa_{2}T_{|\tilde{f}|'}  \ell(T,x)\right)
+O_{p}\left(h^{-3/2}\Delta^{3/2}\kappa_{2}T_{|\tilde{f}|'} T_{\tilde{Z}}\ell(T,x)\right)
+O_{p}\left(h^{1/2}\Delta^{1/2}\kappa^{2}_{2}T_{\tilde{Z}'}T_{|\tilde{f}|'}\ell(T,x)\right)    \\
&~~+O_{p}\left(\Delta^{1/2} \ell(T,x)^{1/2}\right)
+O_{p}\left(\frac{\Delta}{h}T_{\tilde{Z}^2}\ell(T,x)^{1/2}\right)
+O_{p}\left(\Delta^{1/2}\kappa_{2}^{1/2}T^{1/2}_{(\tilde{Z}^2)'} \ell(T,x)^{1/2}\right)  \\
&~~+O_{p}\left(\frac{\Delta}{h}\kappa_{2}^{1/2}T_{\tilde{Z}}T^{1/2}_{(\tilde{Z}^2)'} \ell(T,x)^{1/2}\right)
+O_{p}\left(\Delta^{1/2}\kappa_{2}T_{(\tilde{Z}^2)'}\ell(T,x)^{1/2}\right)    \\
&~~+O_{p}\left(h^{1/2}\Delta^{1/2}\kappa_{1}\kappa_{2}T_{\tilde{Z}'}  \ell(T,x)\right)        \\
&=o_{p}\left(h^{1/2}\ell(T,x)\right)
\end{flalign*}
uniformly in $T$ for all $x\in \mathcal{D}$ as $\Delta \rightarrow 0$ and $h \rightarrow 0$.
Therefore,
\begin{flalign*}
&\frac{[W, M]_{T}}{[M]_{T}}=o_{p}(h^{1/2}),\ \
\text{and}\ \
\frac{[W, M]_{T}}{[W]_{T}}=o_{p}\left(h^{1/2}T^{-1}\right).
\end{flalign*}
Consequently, we can deduce that
\begin{flalign*}
&M_{T}\ell(T,x)^{-1/2}\Rightarrow\sqrt{2}l(K_{2})^{\frac{1}{2}}\tilde{Z}^2(x)\mathbb{N}.
\end{flalign*}
Then,
\begin{flalign*}
&\sqrt{\frac{h\ell(T,x)}{\Delta}} \hat{Z}_{q,\text{NW}}^{2}(x)=\frac{\sqrt{\ell(T,x)}}{\ell(T,x)(1+ o_{p}(1))}M_{T}\Rightarrow\sqrt{2}l(K_{2})^{\frac{1}{2}}\tilde{Z}^2(x)\mathbb{N}.
\end{flalign*}

For $\hat{Z}^{2}_{r,\text{NW}}(x)$, we can apply Lemma $\ref{lemma-K*(g(X_s)-g(X_i))}$ to the first part to obtain that
\begin{flalign*}
&\frac{ \frac{1}{h} \sum_{i=0}^{n-1} K\left(\frac{X_{i\Delta}-x}{h}\right) \int_{i\Delta}^{(i+1)\Delta}\left(\tilde{Z}^2(X_{t})-\tilde{Z}^2(X_{i\Delta})\right)dt  }{\frac{\Delta}{h}\sum_{i=0}^{n-1} K\left(\frac{X_{i\Delta}-x}{h}\right)}
=O_{p}(\kappa_{2}T_{(\tilde{Z}^2)'})=o_{p}(h^{2})
\end{flalign*}
uniformly in $T$ for all $x\in \mathcal{D}$ as $\Delta \rightarrow 0$ and $h \rightarrow 0$.
For the second part of $\hat{Z}^{2}_{r,\text{NW}}(x)$, it is easily to obtain that
\begin{flalign*}
&\frac{\frac{1}{h} \sum_{i=0}^{n-1} K\left(\frac{X_{i\Delta}-x}{h}\right) \int_{i\Delta}^{(i+1)\Delta}2(Y_{t}-Y_{i\Delta})\tilde{f}(X_{t})dt}{\frac{\Delta}{h}\sum_{i=0}^{n-1} K\left(\frac{X_{i\Delta}-x}{h}\right)}  \\
&=O_{p}(\kappa_{1})
+O_{p}\left(\frac{\Delta}{h^{2}}\kappa_{1}T_{\tilde{f}}\right)
+O_{p}(\kappa_{1}\kappa_{2}T_{|\tilde{f}|'})    \\
&=o_{p}(h^{2})
\end{flalign*}
uniformly in $T$ for all $x\in \mathcal{D}$ as $\Delta \rightarrow 0$ and $h \rightarrow 0$ with the application of Lemma $\ref{lemmaKmintgdt}$. Then the proof is complete.
\end{proof}

\begin{proof}[Proof of Theorem \ref{thmempiricalf}]
We only show the sketch of proof in the following.
Denote $\lambda_{f} (x, h, \theta)$ in $(\ref{lambda})$ as $\lambda_{f} $ for simplicity. It is easy to get,
\begin{flalign}
&l_f\left(x,h,\theta\right) 
=2\sum_{i=1}^{n-1}\log\left(1+\lambda_f g_{f_i}\left(x,h,\theta\right)\right) \nonumber\\
&=\left(\sum_{i=1}^{n-1} g_{f_i}^2(x,h,\theta)\right)^{-1}\left(\sum_{i=1}^{n-1} g_{f_i}(x,h,\theta)\right)^{2}+o_{p}(1),  \label{ldoubleg} 
\end{flalign}
for $\theta=\tilde{f}(x)+\frac{\tau(x)}{\sqrt{h\ell(T,x)}}$ with any fixed $\tau(x)$. Then,
\begin{flalign*}
l_f\left(x,h,\tilde{f}(x)\right) =\frac{B^{2}(x,h,\tilde{f}(x))}{A(x,h,\tilde{f}(x))} +o_{p}(1),    
\end{flalign*}
where
\begin{align}
&A(x, h, \tilde{f}(x))=\frac{\sum_{i=1}^{n-1} g_{f_i}^2(x,h,\tilde{f}(x))}
{\left(\sum_{i=1}^{n-1} K\left(\frac{X_{i \Delta}^2-x}{h}\right)\right)^{2}}, \quad \text{and,}\quad
B(x, h, \theta)=\frac{\sum_{i=1}^{n-1} g_{f_i}(x,h,\tilde{f}(x))}{\sum_{i=1}^{n-1} K\left(\frac{X_{i \Delta}^2-x}{h}\right)}.
\end{align}
According to Theorem $\ref{thm-g-NW}$, we have,
\begin{align}
&h\ell(T,x) A(x,h,\tilde{f}(x)) \stackrel{p}{\longrightarrow} l(K_{2})\tilde{Z}^{2}(x), \label{propertyA} \\
&\sqrt{h \ell(T,x)} B(x,h,\tilde{f}(x)) \Rightarrow l(K_{2})^{1/2}\tilde{Z}(x)\mathbb{N}.
\end{align}
Then the conclusion can be obtained.
\end{proof}

\section{Simulation}
In this section, we investigate the finite-sample performance of the estimators for two models by the following measures:
\begin{align*}
&\text{MAE}_{v}=\frac{1}{m}\sum_{i=1}^{m}\left|\frac{1}{L}\sum_{l=1}^{L}\hat{v}^{l}(x_{i})-v(x_{i})\right|,    \\ &\text{MSE}_{v}=\frac{1}{m}\sum_{i=1}^{m}\left|\frac{1}{L}\sum_{l=1}^{L}\hat{v}^{l}(x_{i})-v(x_{i})\right|^{2}.
\end{align*}
where $v$ is the estimated function. $l$ represents the $l$-th Monte Carlo replication, $L=1000$. $\{x_{i}\}_{i=1}^{m}$ are chosen uniformly to cover the
range of sample path of $X_t$.
We use the Epanechnikov kernel $K(u)=\frac{3}{4}(1-u^{2})I_{\{|u|\leq1\}}$ and the bandwidth $h$ is chosen applying the cross-validation rule.

\begin{example}[$X_{t}$ is recurrent]
Consider an infinite horizon FBSDE,
\begin{align}
\label{exa-1}
\left\{\begin{array}{l}
X_{t}=x_{0}+\int_{0}^{t}\sigma dW_{s}, \\
Y_{t}=Y_{T}+\int_{t}^{T}\left((2+\sigma^2)\sin X_{s}-2\sigma^{2}\cos X_{s}-Y_{s}+\sigma Z_{s}
\right)ds-\int_{t}^{T} Z_{s} dW_{s},\ \forall\ t,T>0.
\end{array}\right.
\end{align}
\end{example}
The solution to $(\ref{exa-1})$ is $Y_{t}=2\sin X_{t}$, $Z_{t}=2\sigma \cos X_{t}$. The true estimated functions are $-f(X_{t},Y_{t},Z_{t})=-(2+\sigma^2)\sin X_{t}+2\sigma^{2}\cos X_{t}+Y_{t}-\sigma Z_{t}$ and $Z_{t}^{2}=4\sigma^{2}  \cos^{2}  X_{t}$.
The parameters we take are $x_{0}=0.05$, $\sigma=0.5$. 

Tables $1$-$2$ report the MAE and MSE of the local constant and linear estimators for $-f$ and $Z^{2}$ given the estimated points $X_{t}=x_i$  with different observation time $T$ and $n$.  Figures $1(a)$ and $1(b)$ report the 95\% confidence
intervals for these estimators, based on the common method using the asymptotic normality of the estimators and the empirical likelihood method.
\begin{table}[H]
\centering
\caption{MAE and MSE of the estimators for $f$}
\begin{tabular}{ccccc}
\hline
                                     &    & $T=10,n=1000$ & $T=8,n=3000$ & $T=10,n=5000$ \\ \hline
\multirow{2}{*}{MAE$(\times 10^{1})$}& NW & 0.129         & 0.134        & 0.123         \\ 
                                     & LL & 0.192         & 0.293        & 0.192         \\ \hline
\multirow{2}{*}{MSE$(\times 10^{4})$}& NW & 2.466         & 2.450        & 2.171         \\ 
                                     & LL & 5.420         & 12.498       & 5.476         \\ \hline
\end{tabular}
\end{table}
\begin{table}[H]
\centering
\caption{MAE and MSE of the estimators for $Z^{2}$}
\begin{tabular}{ccccc}
\hline
                                     &    & $T=10,n=1000$ & $T=8,n=3000$ & $T=10,n=5000$ \\ \hline
\multirow{2}{*}{MAE$(\times 10^{1})$}& NW & 0.200         & 0.085        & 0.094         \\
                                     & LL & 0.037         & 0.081        & 0.031         \\ \hline
\multirow{2}{*}{MSE$(\times 10^{4})$}& NW & 4.026         & 0.723        & 0.885         \\  
                                     & LL & 0.140         & 0.653        & 0.100         \\ \hline
\end{tabular}
\end{table}
\begin{figure}[H]
\centering
\begin{minipage}[H]{0.4\textwidth}
\includegraphics[width=1.05\textwidth]{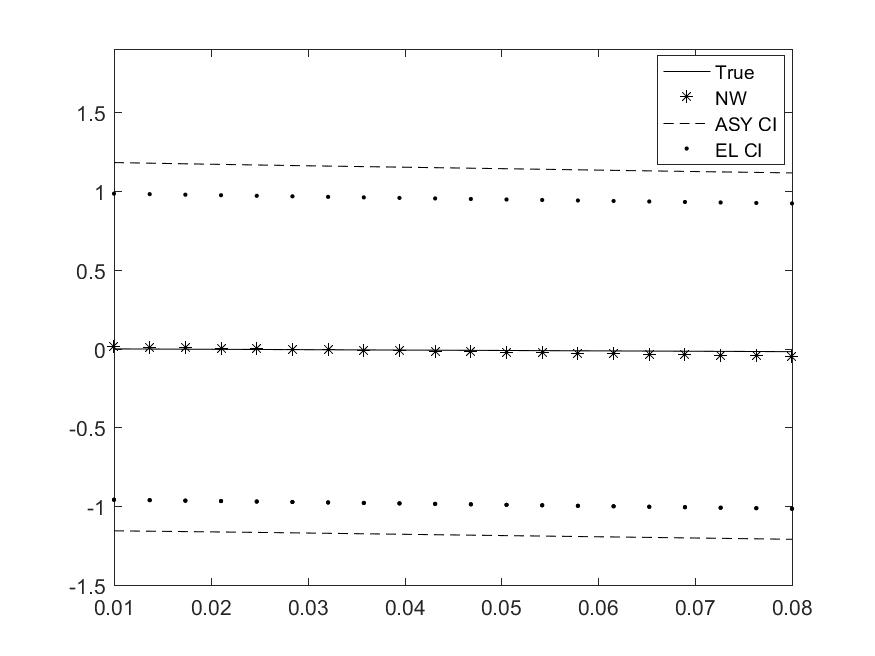}
\caption*{(a) Confidence interval of $f$}
\end{minipage}
\begin{minipage}[H]{0.4\textwidth}
\includegraphics[width=\textwidth]{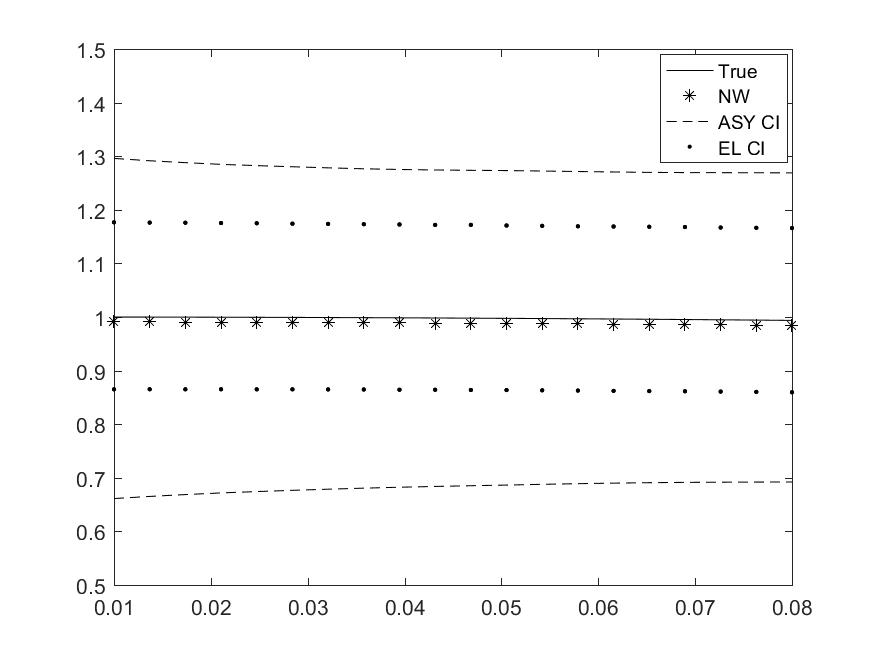}
\caption*{(b) Confidence interval of  $Z^{2}$}
\end{minipage}
\caption{The solid lines represent the true values, the '*' lines represent  the local constant estimators, the '--' dotted lines represent  the confidence intervals based on the common method using the asymptotic normality of the estimator,  the '$\cdot$' dotted lines represent the confidence intervals applying the empirical likelihood method. }
\end{figure}

\begin{example}
Consider an infinite horizon FBSDE,
\begin{align}
\label{exa-2}
\left\{
\begin{array}{l}
X_{t}=x_{0}+\int_{0}^{t} 0.02 X_{s}ds+\int_{0}^{t}0.2 X_{s} dW_{s}, \\
Y_{t}=Y_{T}+\int_{t}^{T}\left(0.04 X_{s}^{2}-0.4Y_{s}-1.1Z_{s} \right)ds-\int_{t}^{T} Z_{s} dW_{s},\ \forall\ t,T>0,
\end{array}
\right.
\end{align}
\end{example}
The solution to $(\ref{exa-2})$ is $Y_{t}=X_{t}^{2}$, $Z_{t}=-0.4 X_{t}^{2}$. We aim to estimate $-f(X_{t},Y_{t},Z_{t})=-0.04 X_{t}^{2}+0.4Y_{t}+1.1Z_{t} $ and $Z_{t}^{2}=0.16  X_{t}^{4}$.
$x_{0}=0.5$.

Tables $3$-$4$ report the MAE and MSE of the local constant and linear estimators for $f$ and $Z^{2}$ given the estimated points $X_{t}=x_i$ with different observation time interval $T$ and  $n$. Figures $2$ reports the 95\% confidence intervals based on the two methods.
\begin{table}[H]
\centering
\caption{MAE and MSE of the estimators for $f$}
\begin{tabular}{ccccc}
\hline
                                      &    & $T=10,n=1000$ & $T=8,n=3000$ & $T=10,n=5000$ \\ \hline
\multirow{2}{*}{MAE$(\times 10^{1})$} & NW & 0.533         & 0.482        & 0.533         \\
                                      & LL & 0.386         & 0.306        & 0.279         \\ \hline 
\multirow{2}{*}{MSE$(\times 10^{1})$} & NW & 0.028         & 0.023        & 0.028         \\  
                                      & LL & 0.016         & 0.011        & 0.010         \\ \hline 
\end{tabular}
\end{table}
\begin{table}[H]
\centering
\caption{MAE and MSE of the estimators for $Z^{2}$}
\begin{tabular}{ccccc}
\hline
                                      &    & $T=10,n=1000$ & $T=8,n=3000$ & $T=10,n=5000$ \\ \hline
\multirow{2}{*}{MAE$(\times 10^{3})$} & NW & 0.532         & 0.630        & 0.529         \\
                                      & LL & 0.201         & 0.182        & 0.139         \\  \hline 
\multirow{2}{*}{MSE$(\times 10^{7})$} & NW & 3.776         & 5.370        & 3.716         \\  
                                      & LL & 0.533         & 0.342        & 0.246         \\  \hline 
\end{tabular}
\end{table}
\begin{figure}[H]
\centering
\begin{minipage}[H]{0.35\textwidth}
\includegraphics[width=0.95\textwidth]{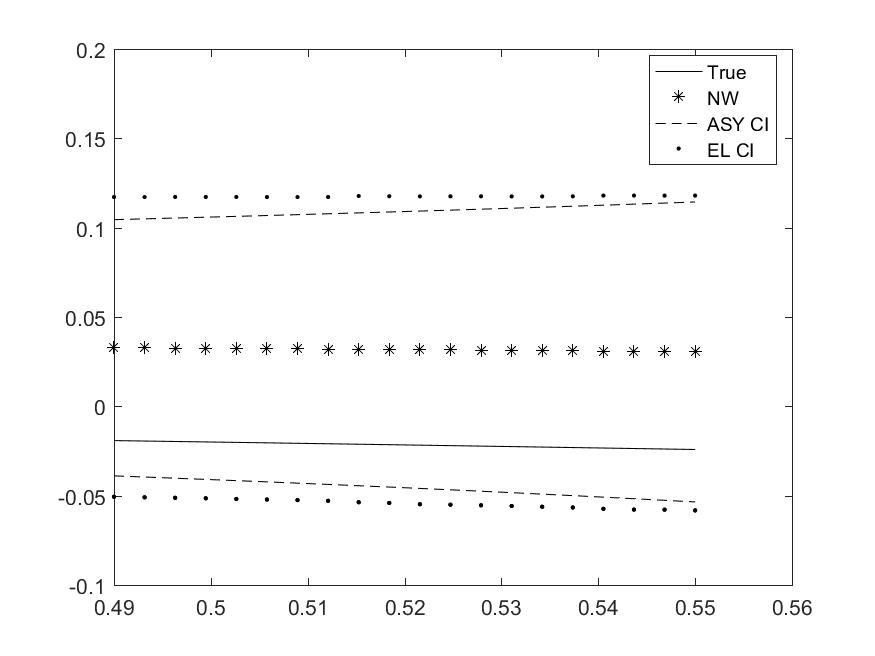}
\caption*{(a) Confidence interval of $f$}
\end{minipage}
\begin{minipage}[H]{0.35\textwidth}
\includegraphics[width=0.95\textwidth]{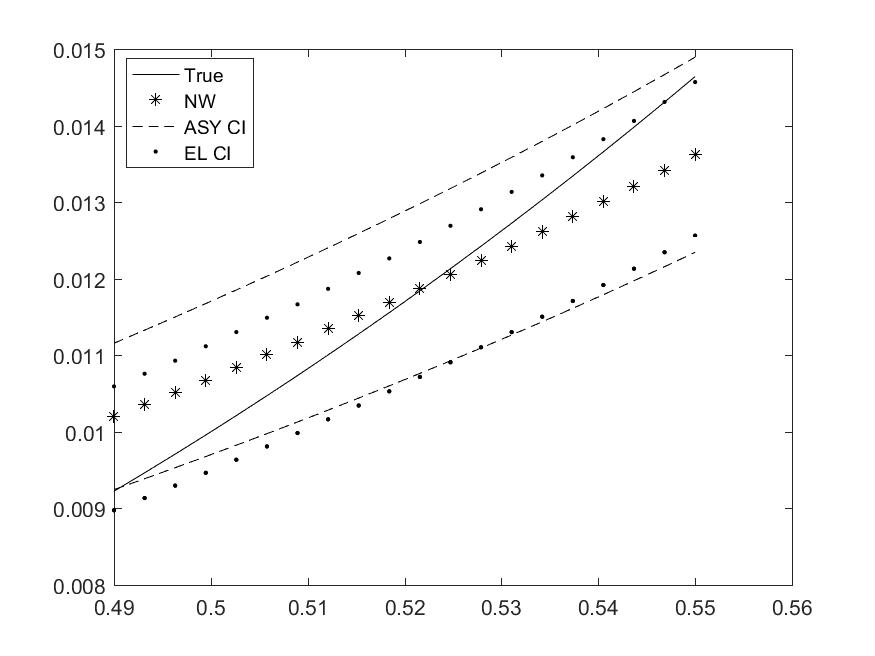}
\caption*{(b) Confidence interval of  $Z^{2}$}
\end{minipage}
\caption{The solid lines represent the true values, the '*' lines represent  the local constant estimators, the '--' dotted lines represent  the confidence intervals based on the common method using the asymptotic normality of the estimator,  the '$\cdot$' dotted lines represent the confidence intervals applying the empirical likelihood method. }
\end{figure}

The above simulations show that the estimators behave better as the time span increases and the observation interval decreases. The local linear estimators have a better performance because their biases do not depend on the first derivative of the corresponding function. Moreover, the empirical likelihood method works well at most cases.

\end{document}